\theoremstyle{plain}
\newtheorem{theorem}{Theorem}[section]
\newtheorem{lemma}[theorem]{Lemma}
\newtheorem{proposition}[theorem]{Proposition}
\newtheorem{corollary}[theorem]{Corollary}
\theoremstyle{definition}
\newtheorem{example}[theorem]{Example}
\newtheorem{remark}[theorem]{Remark}
\theoremstyle{remark}
\numberwithin{equation}{section}
\title{Homomorphisms of C*-algebras and their K-theory}
\author{Parastoo Naderi 
and Jamal Rooin}
 \address[\textbf{Parastoo Naderi}]{
 Department of Mathematics\\ Institute for Advanced Studies in Basic Sciences (IASBS)\\Zanjan 45137--66731\\ Iran}
 \email {p.naderi@iasbs.ac.ir}
 \address[\textbf{Jamal Rooin}]{Department of Mathematics\\ Institute for Advanced Studies in Basic Sciences (IASBS)\\Zanjan 45137--66731\\ Iran}
 \email {rooin@iasbs.ac.ir}
\subjclass[2010]{46L80, 19K14} \keywords{C*-algebra, $*$-homomorphism,
K-theory, torsion free, matrix inversion}
\begin{document}
\maketitle

\begin{abstract}
Let $A$ and $B$ be C*-algebras and
 $\varphi\colon A\to B$ be a 
$*$-homomorphism.
We discuss the properties of the kernel and (co-)image of the induced map
$\mathrm{K}_{0}(\varphi)\colon \mathrm{K}_{0}(A)
\to \mathrm{K}_{0}(B)$
  on the level of K-theory. In particular, we are interested in the case that the co-image is torsion free, and show that it holds when $A$
   and
   $ B $
    are commutative and unital, 
    $B$
     has real rank zero, and 
     $\varphi$
      is unital and injective. We also show that
      $ A$
       is embeddable in
        $B$
         if
       $ \mathrm{K}_{0}(\varphi)$
         is injective and 
         $A$
          has stable rank one and real rank zero. 

\end{abstract}

\section{Introduction and Main Results}\label{sec_intro}
K-theory is a powerful tool in operator algebras and their aplications. 
It has been used in the classification
of certain C*-algebras, starting from
AF~algebras \cite{el76}
and now extending to larger classes \cite{Ro02, TWW16}.
Although, it has been used to study
homomorphisms between C*-algebras \cite{da04},  it seems that
the main focus has been on the use of
K-theory to understand the structure
of the objects of the category of C*-algebras rather than the morphisms.
Consider a $*$-homomorphism 
$\varphi \colon A \to B$   
 between C*-algebras
$A$ and $B$. A natural question is 
that can we determine properties of 
$\varphi$ (such as injectivity,
surjectivity, unitality, and so on)
by looking at its lift 
$\mathrm{K}_{0}(\varphi) \colon \mathrm{K}_{0}(A) \to \mathrm{K}_{0}(B)$, or vice-versa?
(One may also consider 
$\mathrm{K}_{1}(\varphi)$
and $\mathrm{Cu}(\varphi)$.)

In this paper we investigate such a relation between properties of a morphism in the level of C*-algebras and those of its lift to the level of K-theory. Finding such relation would have certain applications. For instance,
some properties of $\varphi$ determine 
relations between $A$ and $B$:
injectivity of $\varphi$ says that
$A$ is embeddable in  $B$,
and
surjectivity of $\varphi$ says that
$B$ is a quotient of $A$.
(Note that embeddability into certain
C*-algebras---e.g., AF~algebras---is
an important problem in operator 
algebras.) In particular, in
Theorem~\ref{thm_inj_sur}\eqref{thm_inj_sur_it1} (below)
we show that $A$ is embeddable in $B$
if $\mathrm{K}_{0}(\varphi)$ is injective
and $A$ has real rank zero and stable rank
one. 
Also, it is desirable to know whether the 
quotient group $\frac{\mathrm{K}_{0}(B)}{\mathrm{K}_{0}(\varphi) (\mathrm{K}_{0}(A))}$
is torsion free. For example, let 
$(X,\varphi)$ and $(Y,\psi)$
be Cantor minimal systems and
let $\pi\colon (X,\varphi)\to (Y,\psi)$
be a factor map (i.e., a continuous
map from $X$ to $Y$ with $\pi\circ \varphi=
\psi\circ\pi$).
Then $\pi$ induces a natural $*$-homomorphism
$\pi_{*}\colon A\to B$ where
$A=\mathrm{C}(Y)\rtimes_{\psi}\mathbb{Z}$
and
$B=\mathrm{C}(X)\rtimes_{\varphi}\mathbb{Z}$, 
and 
$\mathrm{K}^{0}(X,\varphi)\cong \mathrm{K}_{0}(B)$
and 
$\mathrm{K}^{0}(Y,\psi)\cong \mathrm{K}_{0}(A)$  as dimension groups \cite{gps95}.
Then  
$\mathrm{K}_{0}(\pi_{*}) \colon \mathrm{K}_{0}(A) \to \mathrm{K}_{0}(B)$
is injective, and
torsion freeness of
$\frac{\mathrm{K}_{0}(B)}{\mathrm{K}_{0}(A)}$
can be used to check whether 
$\pi$ is almost one-to-one
 (see \cite{su11}).
 
The main result of this paper gives sufficient conditions for the co-image of the $\mathrm{K}_{0}$-map to be torsion-free in the commutative case.

\begin{theorem}\label{disconnected}
Let 
$\varphi \colon A \to B$ be a unital
$*$-homomorphism between
commutative unital C*-algebras
$A$ and $B$.
If $\varphi$ is injective and
$B$ has real rank zero,
then the quotient group
$\frac{\mathrm{K}_{0}(B)}{\mathrm{K}_{0}(\varphi) (\mathrm{K}_{0}(A))}$
is torsion free.
If moreover $A$ has real rank zero,
then $\mathrm{K}_{0}(\varphi)$ is injective.
\end{theorem}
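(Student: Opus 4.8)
\emph{Proof sketch.} The plan is to pass to the Gelfand picture and reduce everything to a statement about continuous integer-valued functions. Write $A=\mathrm{C}(X)$ and $B=\mathrm{C}(Y)$ for compact Hausdorff spaces $X$ and $Y$; the unital $*$-homomorphism $\varphi$ then has the form $\varphi(a)=a\circ\pi$ for a continuous map $\pi\colon Y\to X$, injectivity of $\varphi$ is equivalent to surjectivity of $\pi$ (since $\pi(Y)$ is compact, hence closed, in $X$), and $B$ having real rank zero is equivalent to $Y$ being totally disconnected. Because $Y$ is compact, $X$ is Hausdorff, and $\pi$ is surjective, $\pi$ is a closed, hence quotient, map; this is the feature that will let me descend functions from $Y$ to $X$.

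Next I would record two ``rank'' (dimension) maps. For a projection $p$ in $M_m(\mathrm{C}(Z))$ over a compact Hausdorff space $Z$, the function $z\mapsto\operatorname{rank}p(z)$ is continuous and integer-valued, and $[p]-[q]\mapsto\operatorname{rank}p-\operatorname{rank}q$ defines a homomorphism $r_Z\colon\mathrm{K}_0(\mathrm{C}(Z))\to\mathrm{C}(Z,\Z)$. This $r_Z$ is always surjective: a given $f\in\mathrm{C}(Z,\Z)$ takes finitely many values $n_1,\dots,n_k$ on a clopen partition $Z_1,\dots,Z_k$ of $Z$, and $f=r_Z\big(\sum_i n_i[\chi_{Z_i}]\big)$, where $\chi_{Z_i}\in\mathrm{C}(Z)$ is the characteristic projection of $Z_i$. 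When $Z$ is moreover totally disconnected, $r_Z$ is an isomorphism --- this is the standard identification $\mathrm{K}_0(\mathrm{C}(Z))\cong\mathrm{C}(Z,\Z)$, obtained e.g.\ by writing $\mathrm{C}(Z)$ as an inductive limit of finite-dimensional C*-algebras. Since $\varphi$ sends a projection $p$ to $p\circ\pi$ and $\operatorname{rank}(p\circ\pi)=(\operatorname{rank}p)\circ\pi$, one obtains the commuting relation
\[
r_Y\circ\mathrm{K}_0(\varphi)=\pi^{*}\circ r_X ,
\]
where $\pi^{*}\colon\mathrm{C}(X,\Z)\to\mathrm{C}(Y,\Z)$, $g\mapsto g\circ\pi$, is injective because $\pi$ is surjective. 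As $r_Y$ is an isomorphism and $r_X$ is surjective, this yields $\mathrm{K}_0(\varphi)(\mathrm{K}_0(A))=r_Y^{-1}\big(\pi^{*}(\mathrm{C}(X,\Z))\big)$, and hence an isomorphism $\mathrm{K}_0(B)/\mathrm{K}_0(\varphi)(\mathrm{K}_0(A))\cong\mathrm{C}(Y,\Z)/\pi^{*}(\mathrm{C}(X,\Z))$.

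It then remains to show that $\mathrm{C}(Y,\Z)/\pi^{*}(\mathrm{C}(X,\Z))$ is torsion free, and this is the one genuinely topological step. Suppose $f\in\mathrm{C}(Y,\Z)$ and $nf=g\circ\pi$ for some integer $n\geq 1$ and some $g\in\mathrm{C}(X,\Z)$. For each $x\in X$ pick $y$ with $\pi(y)=x$; then $nf(y)=g(x)$, and since $\Z$ is torsion free the value $f(y)$ is independent of the choice of $y\in\pi^{-1}(x)$, so $h(x):=f(y)$ is a well-defined function $X\to\Z$ with $h\circ\pi=f$. As $\pi$ is a quotient map and $f$ is continuous, $h$ is continuous, i.e.\ $h\in\mathrm{C}(X,\Z)$; hence $f=\pi^{*}h\in\pi^{*}(\mathrm{C}(X,\Z))$, so the class of $f$ in the quotient is zero. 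For the ``moreover'' statement, if $A$ also has real rank zero then $X$ is totally disconnected, so $r_X$ too is an isomorphism, and then $\mathrm{K}_0(\varphi)=r_Y^{-1}\circ\pi^{*}\circ r_X$ is a composition of injective maps, hence injective.

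The routine parts are the Gelfand dictionary and the surjectivity of $r_Z$. The points that need care --- and which I regard as the crux --- are the identification $\mathrm{K}_0(\mathrm{C}(Y))\cong\mathrm{C}(Y,\Z)$, i.e.\ that total disconnectedness of $Y$ removes any reduced-$\mathrm{K}^{0}$ contribution so that $\mathrm{K}_0(\varphi)$ is literally computed by rank functions, and the verification that the descended function $h$ is continuous --- which is precisely where compactness of $Y$ (through the closed/quotient-map property of $\pi$) is used.
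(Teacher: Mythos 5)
Your proposal is correct and follows essentially the same route as the paper: pass to the Gelfand picture with a surjective $\pi\colon Y\to X$, identify $\mathrm{K}_0$ of a totally disconnected compact space with $\mathrm{C}(\,\cdot\,,\Z)$ via rank functions, identify the image of $\mathrm{K}_0(\varphi)$ with $\pi^{*}(\mathrm{C}(X,\Z))$ using the clopen-partition projection construction, and prove torsion freeness by a divisibility/descent argument (your quotient-map descent of $h$ is a slightly roundabout version of the paper's direct observation that $n$ divides $g$, but it is valid). Your ``moreover'' step, factoring $\mathrm{K}_0(\varphi)$ through the injective maps $\pi^{*}\circ r_X$, is a cleaner packaging of the paper's cancellation-plus-rank argument, but rests on the same fact that rank functions determine $\mathrm{K}_0$ over totally disconnected spaces.
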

 
As an example, if $\pi$ is a factor map
 as above then
$\theta\colon C(Y)\to C(X)$, $f\mapsto f\circ \pi$, is an  injective   $*$-homomorphism, 
 $\mathrm{K}_{0}(\theta)$ is injective,
 and 
$ {\mathrm{K}_{0}(C(Y))}/{\mathrm{K}_{0}(C(X))}$
is torsion free.
(See Proposition~\ref{prop_cantor} for 
 more general examples.)
 
Our next main result gives relations between injectivity or surjectivity of maps in the two levels. 
\begin{theorem}\label{thm_inj_sur}
Let $\varphi \colon A \to B$ be a  
$*$-homomorphism between C*-algebras
$A$ and $B$. Consider
$\mathrm{K}_{0}(\varphi) \colon \mathrm{K}_{0}(A) \to \mathrm{K}_{0}(B)$
and
$\mathrm{K}_{1}(\varphi) \colon \mathrm{K}_{1}(A) \to \mathrm{K}_{1}(B)$.
\begin{enumerate}
\item\label{thm_inj_sur_it1}
Let
$A$ have  the cancellation property and Property~(SP)
(in particular, if
$A$ has stable rank one and real rank zero).
If $\mathrm{K}_{0}(\varphi)$ is injective 
  then  so is $\varphi$;
\item\label{thm_inj_sur_it2}
Let
$A$ have  real rank zero.
If  $\varphi$ is surjective
then  $\mathrm{K}_{0}(\varphi)$ is surjective
and the natural map
$\mathrm{K}_{1}(I)\to \mathrm{K}_{1}(A)$
is injective,
where $I=\ker(\varphi)$.
\item\label{thm_inj_sur_it3}
Let $A$ have stable rank one.
If  $\varphi$ is surjective
then  
$\mathrm{K}_{1}(\varphi)$
is surjective.
\end{enumerate}
\end{theorem}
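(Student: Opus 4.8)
The three parts are essentially independent, though \eqref{thm_inj_sur_it2} and \eqref{thm_inj_sur_it3} both run through the cyclic six-term exact sequence of the extension $0\to I\to A\to B\to 0$, where $I=\ker\varphi$; when $\varphi$ is surjective this identifies $B$ with $A/I$. I would dispatch \eqref{thm_inj_sur_it1} first, by contraposition. If $\varphi$ is not injective, then $I$ is a nonzero closed two-sided ideal, hence a nonzero hereditary sub-C*-algebra of $A$, so Property~(SP) yields a nonzero projection $p\in I$. Then $\varphi(p)=0$, whence $\mathrm{K}_0(\varphi)([p]_0)=0$, and injectivity of $\mathrm{K}_0(\varphi)$ gives $[p]_0=0$ in $\mathrm{K}_0(A)$; the cancellation property then forces $p$ to be Murray--von Neumann equivalent to $0$, i.e.\ $p=0$, a contradiction. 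For the parenthetical hypothesis one only needs that stable rank one implies cancellation and that real rank zero implies Property~(SP) (indeed, every nonzero hereditary subalgebra of a real-rank-zero algebra contains a nonzero projection). The only bookkeeping is to pass to $\widetilde A$ when $A$ is nonunital, so that ``$[p]_0=0\Rightarrow p\sim 0$'' is exactly the injectivity of $V(\widetilde A)\to\mathrm{K}_0(\widetilde A)$ coming from cancellation.

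For \eqref{thm_inj_sur_it2} I would look at the segment
\[
\mathrm{K}_0(A)\xrightarrow{\ \mathrm{K}_0(\varphi)\ }\mathrm{K}_0(B)\xrightarrow{\ \delta\ }\mathrm{K}_1(I)\xrightarrow{\ \iota_*\ }\mathrm{K}_1(A)
\]
of the six-term sequence, with $\delta$ the exponential map and $\iota\colon I\hookrightarrow A$ the inclusion. By exactness at $\mathrm{K}_0(B)$, surjectivity of $\mathrm{K}_0(\varphi)$ is equivalent to $\delta=0$; by exactness at $\mathrm{K}_1(I)$, injectivity of $\iota_*$ is also equivalent to $\delta=0$. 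So \eqref{thm_inj_sur_it2} collapses to the single claim that the exponential map vanishes. Here real rank zero enters: it is inherited by $\widetilde A$ and by every matrix algebra $M_n(\widetilde A)$, so projections lift along the surjection $M_n(\widetilde A)\to M_n(\widetilde B)$. Representing an arbitrary class of $\mathrm{K}_0(B)$ by a projection $p$ in some $M_n(\widetilde B)$ (minus the class of its scalar part) and lifting $p$ to a projection $\widehat p\in M_n(\widetilde A)$, the standard formula for $\delta$ evaluates to $-[\exp(2\pi i\widehat p)]_1=-[1]_1=0$. Hence $\delta=0$. (Alternatively one may just invoke the known fact that real rank zero of $A$ makes $\mathrm{K}_0(A)\to\mathrm{K}_0(A/I)$ surjective for every ideal $I$.)

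For \eqref{thm_inj_sur_it3} I would avoid the index map and argue directly. Stable rank does not increase under quotients, so $\mathrm{sr}(B)=1$ and hence $\mathrm{K}_1(B)=U(\widetilde B)/U_0(\widetilde B)$. Given $[u]_1\in\mathrm{K}_1(B)$ with $u\in U(\widetilde B)$, lift $u$ to some $a\in\widetilde A$; since $\mathrm{sr}(\widetilde A)=1$ (as $\mathrm{sr}(A)=1$), the invertibles are dense in $\widetilde A$, so there is an invertible $a'\in\widetilde A$ with $\|a'-a\|$ as small as desired, and then $\varphi(a')\in\mathrm{GL}(\widetilde B)$ is correspondingly close to $u$. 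Taking $v=a'\,|a'|^{-1}\in U(\widetilde A)$, the unitary part of the polar decomposition, continuity of polar decomposition near the unitary $u$ makes $\varphi(v)=\varphi(a')\,|\varphi(a')|^{-1}$ norm-close to $u$, so $\varphi(v)u^*\in U_0(\widetilde B)$ and therefore $[u]_1=[\varphi(v)]_1=\mathrm{K}_1(\varphi)([v]_1)$ lies in the image of $\mathrm{K}_1(\varphi)$. Since $[u]_1$ was arbitrary, $\mathrm{K}_1(\varphi)$ is surjective.

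I do not expect a genuine obstacle in any of the three parts. The step requiring the most care is in \eqref{thm_inj_sur_it2}: confirming that real rank zero survives unitization and matrix amplification, so that the needed projections lift and the exponential-map computation is legitimate; in \eqref{thm_inj_sur_it3}, the only slightly delicate point is the elementary norm estimate guaranteeing that $\varphi(v)$ is close to $u$.
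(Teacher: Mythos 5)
Your argument is correct in all three parts, and Parts \eqref{thm_inj_sur_it1} and \eqref{thm_inj_sur_it2} essentially coincide with the paper's proof: Part \eqref{thm_inj_sur_it1} is the same contradiction/contraposition via Property~(SP) and cancellation, and Part \eqref{thm_inj_sur_it2} uses the same two ingredients (projection lifting along surjections out of real rank zero algebras, i.e.\ the Brown--Pedersen result the paper cites as \cite[Theorem~3.14]{L.G.Brown}, plus the six-term sequence); the only difference is organizational, in that you reduce both conclusions to the vanishing of the exponential map and compute $\delta$ via the $-[\exp(2\pi i\,\widehat p)]_1$ formula, whereas the paper first proves surjectivity of $\mathrm{K}_0(\varphi)$ directly from the lifted projections (using that $\mathrm{K}_0(B)$ is generated by projection classes) and then reads off injectivity of $\mathrm{K}_1(I)\to\mathrm{K}_1(A)$ from exactness. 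Part \eqref{thm_inj_sur_it3} is where you genuinely diverge: the paper passes through property~$IR$ and stable~$IR$ (via \cite{FR96}) and invokes \cite[Proposition~3.4]{Br19} to get injectivity of $\mathrm{K}_0(I)\to\mathrm{K}_0(A)$, then concludes by the six-term sequence; you instead give the classical direct argument, using that stable rank does not increase under quotients (so $\mathrm{sr}(B)=1$ and every $\mathrm{K}_1(B)$-class is represented by a unitary of $\widetilde B$ --- this K$_1$-surjectivity at the $1\times 1$ level is Rieffel's nonstable K-theory result and should be cited), then lifting, perturbing to an invertible by density of $\mathrm{GL}(\widetilde A)$, and taking unitary parts of polar decompositions with the standard $\|w-u\|<2\Rightarrow wu^*\in U_0$ estimate. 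Your route is more elementary and self-contained (no appeal to \cite{Br19}), while the paper's route yields the slightly stronger intermediate fact that the index map $\mathrm{K}_1(B)\to\mathrm{K}_0(I)$ vanishes, i.e.\ $\mathrm{K}_0(I)\to\mathrm{K}_0(A)$ is injective, which is of independent interest.
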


We give a better characterization in the finite dimensional case, with potential applications to Quantum Informations. (See, e.g., \cite{HS} 
in which Lemma~VI.5 is used as one
of the main ingredients.) 

\begin{theorem}\label{main_fd}
 Let 
	$A =M_{n_1}(\mathbb{C})\oplus \cdots \oplus M_{n_k}(\mathbb{C})$
	and
	$B=M_{m_1}(\mathbb{C})\oplus \cdots \oplus M_{m_\ell}(\mathbb{C})$	
	be finite-dimensional 
    C*-algebras, and let
	$\varphi \colon A \rightarrow B$
	be a $*$-homomorphism with multiplicity
matrix $E$ (see Theorem~\ref{a} below).	
If 
 $\mathrm{K}_{0}(\varphi) \colon \mathrm{K}_{0}(A) \to \mathrm{K}_{0}(B)$ is injective
 then so is $\varphi$. If
 $\varphi$ is surjective then so is 
 $\mathrm{K}_{0}(\varphi)$.
Moreover, if
 $\mathrm{K}_{0}(\varphi)$ is injective
then
 the following statements are 
equivalent:
 \begin{enumerate}
  \item\label{main_fd_it3}
 $\frac{\mathrm{K}_{0}(B)}{\mathrm{K}_{0}(A) }$
 is torsion free;
 \item\label{main_fd_it1}
 $\gcd\big(\{\det(F) \colon F
 \text{ is a full sqaure submatrix of}\ E
 \}\big)=1$;
 \item\label{main_fd_it2}
 there exists $K\in M_{k\times \ell}(\mathbb{Z})$ such that $KE=I_k$.
 \end{enumerate}
\end{theorem}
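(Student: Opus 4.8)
The plan is to reduce everything to elementary linear algebra over $\mathbb{Z}$ via the standard K-theory dictionary for finite-dimensional C*-algebras. Recall that $\mathrm{K}_{0}(A)\cong\mathbb{Z}^{k}$ and $\mathrm{K}_{0}(B)\cong\mathbb{Z}^{\ell}$, with bases the classes of minimal projections in the summands, and that under these identifications $\mathrm{K}_{0}(\varphi)$ is left multiplication by the multiplicity matrix $E\in M_{\ell\times k}(\mathbb{Z})$ (whose $(i,j)$ entry is the multiplicity of the $j$-th summand of $A$ inside the $i$-th summand of $B$). Since a finite-dimensional C*-algebra has stable rank one, real rank zero, the cancellation property and Property~(SP), the first two assertions are immediate special cases of Theorem~\ref{thm_inj_sur}\eqref{thm_inj_sur_it1} and \eqref{thm_inj_sur_it2}; alternatively one sees them by inspection of $E$, since a non-injective $\varphi$ has a full matrix summand of $A$ in its kernel and hence $E$ has a zero column, while a surjective $\varphi$ makes $B$, up to a permutation of summands, a coordinate sub-sum of $A$, so that $E$ is a column-permutation of $[\,I_{\ell}\mid 0\,]$ and $\mathrm{K}_{0}(\varphi)$ is onto.

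For the equivalence, suppose $\mathrm{K}_{0}(\varphi)$ is injective, i.e. $E$ has column rank $k$; then $k\le\ell$, a full square submatrix of $E$ is a $k\times k$ submatrix, and the $\gcd$ in \eqref{main_fd_it1} is a well-defined positive integer. I would establish the three statements equivalent by closing the following chain. \eqref{main_fd_it2}$\Rightarrow$\eqref{main_fd_it1}: by the Cauchy--Binet formula, $1=\det(KE)$ is a $\mathbb{Z}$-linear combination of the $k\times k$ minors of $E$, so their $\gcd$ is $1$. \eqref{main_fd_it1}$\Rightarrow$\eqref{main_fd_it2}: write $1=\sum_{S}c_{S}\det(E_{S})$ by B\'ezout, where $S$ ranges over the $k$-element sets of rows, $E_{S}$ is the corresponding $k\times k$ submatrix and $c_{S}\in\mathbb{Z}$; Cramer's rule gives $\operatorname{adj}(E_{S})\,E_{S}=\det(E_{S})\,I_{k}$, so padding $\operatorname{adj}(E_{S})$ with zero columns to a matrix $K_{S}\in M_{k\times\ell}(\mathbb{Z})$ with $K_{S}E=\det(E_{S})\,I_{k}$ and putting $K=\sum_{S}c_{S}K_{S}$ yields $KE=I_{k}$. \eqref{main_fd_it2}$\Rightarrow$\eqref{main_fd_it3}: if $KE=I_{k}$ and $n[x]=0$ in $\mathbb{Z}^{\ell}/E\mathbb{Z}^{k}$ with $x\in\mathbb{Z}^{\ell}$ and $n\ge 1$, then $nx=Ey$ for some $y$, hence $y=nKx$ and $x=EKx\in E\mathbb{Z}^{k}$, so the quotient is torsion free. \eqref{main_fd_it3}$\Rightarrow$\eqref{main_fd_it2}: the quotient $\mathbb{Z}^{\ell}/E\mathbb{Z}^{k}$ is finitely generated, so if it is torsion free it is free of rank $\ell-k$; then $0\to\mathbb{Z}^{k}\xrightarrow{E}\mathbb{Z}^{\ell}\to\mathbb{Z}^{\ell}/E\mathbb{Z}^{k}\to 0$ splits and a splitting retraction $\mathbb{Z}^{\ell}\to\mathbb{Z}^{k}$ is the desired $K$.

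A cleaner, unified alternative is to run all of this through the Smith normal form $E=UDV$ with $U\in GL_{\ell}(\mathbb{Z})$, $V\in GL_{k}(\mathbb{Z})$ and $D$ the $\ell\times k$ matrix carrying the elementary divisors $d_{1}\mid\cdots\mid d_{k}$ down the diagonal and zeros elsewhere: then $\mathbb{Z}^{\ell}/E\mathbb{Z}^{k}\cong\mathbb{Z}^{\ell-k}\oplus\bigoplus_{i}\mathbb{Z}/d_{i}\mathbb{Z}$, the $\gcd$ of the $k\times k$ minors is invariant under multiplication by $U$ and $V$ and equals $d_{1}\cdots d_{k}$, and $KE=I_{k}$ is solvable iff $K'D=I_{k}$ is (with $K'=VKU$), so that each of \eqref{main_fd_it3}, \eqref{main_fd_it1}, \eqref{main_fd_it2} is equivalent to $d_{1}=\cdots=d_{k}=1$. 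I do not expect a genuine obstacle here; the only points needing care are fixing the K-theory dictionary so that $\mathrm{K}_{0}(\varphi)$ really is left multiplication by the $\ell\times k$ matrix $E$, and, in the step \eqref{main_fd_it3}$\Rightarrow$\eqref{main_fd_it2}, invoking the structure theorem for finitely generated abelian groups to conclude that a torsion-free quotient is free, hence projective.
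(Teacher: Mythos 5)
Your proposal is correct, and for the first two assertions it coincides with the paper's route (quoting Theorem~\ref{thm_inj_sur}, finite-dimensional C*-algebras having real rank zero and stable rank one); your Bézout-plus-adjugate padding for \eqref{main_fd_it1}$\Rightarrow$\eqref{main_fd_it2} and the direct computation for \eqref{main_fd_it2}$\Rightarrow$\eqref{main_fd_it3} are also exactly the paper's Proposition~\ref{prop_tf_fd} and the easy implication in Proposition~\ref{prop_nonsq}. Where you genuinely diverge is the hard return direction from torsion freeness. The paper proves (with $E$ injective) that torsion freeness of $\mathbb{Z}^{\ell}/E\mathbb{Z}^{k}$ forces the gcd of the $k\times k$ minors to be $1$ by a bespoke downward induction on minor sizes: the candidate gcd $d$ is shown to divide every minor, hence every entry, whence $d^{k}\,|\,d$ and $d=1$ (with a separate case $k=1$); the authors emphasize this torsion-freeness argument as new. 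You instead prove \eqref{main_fd_it3}$\Rightarrow$\eqref{main_fd_it2} by the structure theorem (a finitely generated torsion-free quotient is free, so the exact sequence $0\to\mathbb{Z}^{k}\xrightarrow{E}\mathbb{Z}^{\ell}\to\mathbb{Z}^{\ell}/E\mathbb{Z}^{k}\to 0$ splits and the retraction is $K$), and then close the circle with Cauchy--Binet for \eqref{main_fd_it2}$\Rightarrow$\eqref{main_fd_it1}; your Smith normal form remark subsumes all three equivalences at once. Your route is shorter and leans on standard structure theory, and you correctly use the standing injectivity hypothesis both for exactness at $\mathbb{Z}^{k}$ and to know $k\le\ell$ so that Cauchy--Binet applies; the paper's route is more elementary and self-contained (no structure theorem, no Cauchy--Binet) and its constructive Proposition~\ref{prop_tf_fd} is what powers the explicit computation of one-sided inverses in Example~\ref{exa_inv}, a feature you retain anyway since you also include the adjugate construction. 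The only soft spot is your parenthetical ``alternative'' for the second assertion: that surjectivity of $\varphi$ makes $E$ a column permutation of $[\,I_{\ell}\mid 0\,]$ needs the indices of the nonzero entries in distinct rows to be distinct, which follows from joint (not merely componentwise) surjectivity, as in Proposition~\ref{d}; since your primary citation of Theorem~\ref{thm_inj_sur} already covers this, it is an aside rather than a gap.
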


The equivalence of parts \eqref{main_fd_it1}
and \eqref{main_fd_it2} holds in a more
general setting (see Corollary~\ref{prop_inverse}). This gives a criterion
for one-sided invertibility of integer matrices. We give a method
to compute  one-sided inverses
of nonsquare integer matrices
(see Proposition~\ref{prop_tf_fd}
and Example~\ref{exa_inv}).

The paper is organized  as follows.
In Section~\ref{sec_pre} we fix our notations
and recall some known results. In 
Section~\ref{sec_pf} we give the proof of
the main results.
Section~\ref{sec_further} is devoted to  conclusions and applications.

\section{Preliminaries}\label{sec_pre}
Let $A$ be a C*-algebra and let
$p,q\in A$ be projections.
We write $p\sim q$ is $p$ and $q$
are Murray-von~Neumann equivalent.
For an integer matrix $E$ 
we use $\gcd(E)$ to show the 
 greatest common divisor of the set of  all entries of $E$.
 It is convenient to see elements
  	of 
 $\mathbb{Z}^{k}$ as column matrices.
 However, we will write $M_{k \times 1}(\mathbb{Z})$ instead of $\mathbb{Z}^{k}$
 to avoid confusion.
Let $k,\ell \in \mathbb{N}$ and let
$E\in M_{k \times \ell}(\mathbb{Z})$.
By a \emph{full square submatrix}
of $E$ we mean a square submatrix 
with $\min(k,\ell)$ rows.
We use $I_{k}$ for the $k\times k$ identity
matrix.
Recall that the \emph{adjugate}
(also called \emph{adjunct}) of
a square matrix $F$, denoted by
$\mathrm{adj}(F)$, is the transpose of the cofactor matrix.
If $d\in\mathbb{Z}$ and $E$ is an integer
matrix, by $d\,| E$ we mean that $d$
divides every entry of $E$.

Recall that an abelian group $G$ is called
\emph{torsion free} if $nx=0$ implies 
$x=0$, for every $n\in \mathbb{N}$
and every $x\in G$. 

We adapt the notation in \cite{Rordam00, WO93}
for K-theory.
In particular, for a C*-algebra $A$,
$\mathcal{P}_{n}(A)$ is the set
of all projections of $M_n(A)$,
and 
$\mathcal{P}_{\infty}(A)=\bigcup_{n=1}^{\infty}
\mathcal{P}_{n}(A)$.
For any $p\in \mathcal{P}_{\infty}(A)$
we use $[p]_{0}$ for 
the equivalence class of $p$
in $\mathrm{K}_{0}(A)$ 
(see \cite[Definitions~3.1.4 and
4.1.1]{Rordam00}).
Sometimes we write
$[p]^{A}_{0}$ to emphasize
that $[p]_{0}$ is considered 
in $\mathrm{K}_{0}(A)$.
 Recall that if
   $A$ and $B$ are  C*-algebras and 
   $\varphi \colon A \to B$ is a
    $*$-homomorphism, then 
    $\mathrm{K}_{0}(\varphi)\colon \mathrm{K}_{0}(A)
\to \mathrm{K}_{0}(B)$ is defined by 
\[
      \mathrm{K}_{0}(\varphi)([p]_{0}) = [\varphi(p)]_{0}, \ p \in \mathcal{P}_{\infty}(A).
      \]
      
      The following theorem
      of Bratteli plays an important role in our study of $*$-homomorphisms of 
      finite-dimensional 
    C*-algebras.       
\begin{theorem}[see \cite{aeg15}, Theorem~2.1]\label{a}
 Let 
	$A =M_{n_1}(\mathbb{C})\oplus \cdots \oplus M_{n_k}(\mathbb{C})$
	and
	$B=M_{m_1}(\mathbb{C})\oplus \cdots \oplus M_{m_\ell}(\mathbb{C})$	
	be finite-dimensional 
    C*-algebras, and let
	$\varphi \colon A\rightarrow B$
	be a $*$-homomorphism. Then there is a unique 
	$\ell\times k$ matrix	$E=(a_{i,j})$ of   
	 nonnegative 
	integers with the property that there is a unitary
	$\nu=(\nu_1,\ldots,\nu_{\ell})\in B$ such that if we set
	$\varphi_i=\pi_i \circ \varphi \colon A \rightarrow M_{m_i}$
	then
	\begin{align*}
	\nu_i \varphi (u_1,\ldots,u_k)\nu_i^*&=\left[\begin{matrix}
	u_1^{(a_{i,1})}&& &&0\\
	&u_2^{(a_{i,2})}&&&\\
		&&\ddots&&\\
	&&&u_k^{(a_{i,k})}&\\
	0& &&&0^{(s_i)}
	\end{matrix} \right], & \mbox{for}\ (u_1,\ldots,u_k)\in A\\
	\end{align*}
 where	 $s_i$ is defined by the equation
	  $\sum_{j=1}^k a_{i,j}n_j+s_i=m_i$ for $1\leq i\leq \ell$.  Thus, if $V_1$ 
	  and $V_2$ denote the column matrices such that $V_1^T=(n_1,\ldots ,n_k)$ 
	  and $V_2^T=(m_1,\ldots,m_\ell)$, then $E V_1\leq V_2$. Moreover, we have
 
\begin{enumerate}
	\item\label{inj} 
	$\varphi$ is injective if and only if for each $j$ there is an	$i$ such that	$a_{i,j}\neq 0$;
	\item\label{a_it2} 
	$\varphi$ is unital if and only if $EV_1=V_2$,
	i.e., $s_{i}=0$ for any
	$i=1,\ldots,\ell$.
	
\end{enumerate}
\end{theorem}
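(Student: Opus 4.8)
The plan is to recognize Theorem~\ref{a} as the finite-dimensional structure theorem for $*$-representations and to prove it by decomposing $\varphi$ into irreducibles, one coordinate of $B$ at a time. First I would reduce to a single matrix target: since $B=\bigoplus_{i=1}^{\ell}M_{m_i}(\mathbb{C})$ and the $\pi_i\colon B\to M_{m_i}(\mathbb{C})$ are the coordinate projections, $\varphi$ is completely determined by the family $\varphi_i=\pi_i\circ\varphi$, each a $*$-homomorphism $A\to M_{m_i}(\mathbb{C})$, that is, a (possibly degenerate) $*$-representation of $A$ on $\mathbb{C}^{m_i}$. It therefore suffices to conjugate each $\varphi_i$ into the asserted block form by a unitary $\nu_i\in M_{m_i}(\mathbb{C})$ and then to assemble $\nu=(\nu_1,\ldots,\nu_\ell)$.

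Fix $i$ and write $\psi=\varphi_i$ on $H=\mathbb{C}^{m_i}$. Let $P=\psi(1_A)$, so that $\psi$ vanishes on $(1-P)H$; writing $s_i=\dim(1-P)H$, this subspace contributes the $0^{(s_i)}$ block. On $PH$ the representation is nondegenerate, and the key point is that it is completely reducible: if $W\subseteq H$ is $\psi$-invariant then, because $\psi(a^*)=\psi(a)^*$, the orthogonal complement $W^{\perp}$ is again $\psi$-invariant, so one may split off irreducible subrepresentations by induction on $\dim H$. Since each summand $M_{n_j}(\mathbb{C})$ is simple, the only irreducible representations of $A=\bigoplus_j M_{n_j}(\mathbb{C})$ up to unitary equivalence are the standard representations of the $j$-th summand on $\mathbb{C}^{n_j}$ (factoring through $\pi_j$). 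Letting $a_{i,j}\ge 0$ denote the multiplicity of the standard representation of $M_{n_j}(\mathbb{C})$ in $\psi$, grouping the isotypic pieces and choosing an adapted orthonormal basis of $\mathbb{C}^{m_i}$ yields a unitary $\nu_i\in M_{m_i}(\mathbb{C})$ with $\nu_i\,\psi(u_1,\ldots,u_k)\,\nu_i^*=\mathrm{diag}\big(u_1^{(a_{i,1})},\ldots,u_k^{(a_{i,k})},0^{(s_i)}\big)$, which is exactly the normal form. Counting dimensions gives $\sum_{j}a_{i,j}n_j+s_i=m_i$ with $s_i\ge 0$, hence $EV_1\le V_2$ entrywise.

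For uniqueness I would exhibit each $a_{i,j}$ as an intrinsic invariant of $\varphi$. Taking a minimal (rank-one) projection $e_j$ in the summand $M_{n_j}(\mathbb{C})\subseteq A$, the normal form shows that $\varphi_i(e_j)$ is a projection of rank $a_{i,j}$ in $M_{m_i}(\mathbb{C})$; thus $a_{i,j}=\mathrm{rank}(\varphi_i(e_j))$ depends on $\varphi$ alone and not on the choice of $\nu$. This determines $E$ uniquely.

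It then remains to read off the two characterizations. Since every ideal of $A$ is a subsum of its matrix summands (each $M_{n_j}(\mathbb{C})$ being simple), $\varphi$ is injective iff it kills no summand, i.e.\ iff $\varphi$ is nonzero on $M_{n_j}(\mathbb{C})$ for every $j$; and $\varphi$ vanishes on $M_{n_j}(\mathbb{C})$ precisely when $a_{i,j}=0$ for all $i$, giving part~\eqref{inj}. For part~\eqref{a_it2}, $\varphi$ is unital iff $\varphi_i(1_A)=1_{M_{m_i}}$ for every $i$, iff $P=1$ on each $\mathbb{C}^{m_i}$, iff $s_i=0$ for all $i$, which by the dimension equation is $\sum_j a_{i,j}n_j=m_i$, i.e.\ $EV_1=V_2$. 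The main obstacle is the representation theory in the second paragraph---complete reducibility of the finite-dimensional $*$-representation together with the classification of the irreducibles of $\bigoplus_j M_{n_j}(\mathbb{C})$; granting that, the normal form, the inequality $EV_1\le V_2$, uniqueness, and both characterizations are bookkeeping.
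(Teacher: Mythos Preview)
Your argument is correct and is the standard route to Bratteli's structure theorem: reduce to a single matrix block, invoke complete reducibility of finite-dimensional $*$-representations, identify the irreducibles of $\bigoplus_j M_{n_j}(\mathbb{C})$ as the standard representations of the summands, and read off multiplicities. Your uniqueness argument via $a_{i,j}=\mathrm{rank}\,\varphi_i(e_j)$ for a minimal projection $e_j$ is clean, and the deductions of \eqref{inj} and \eqref{a_it2} from the normal form are correct.

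There is, however, nothing to compare against: the paper does not prove Theorem~\ref{a}. It is quoted verbatim as a known result (attributed to \cite{aeg15}, Theorem~2.1) in the preliminaries section and used as a black box throughout. So your proposal supplies a proof where the paper simply cites one; the approach you take is essentially the classical one that underlies the cited reference.
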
 
The matrix $E$ in the previous theorem is called
the \emph{multiplicity  matrix} of $\varphi$
and is denoted by $R_{\varphi}$.
 Using the natural identification of  
  	$\mathrm{K}_{0}(A)$ with $\mathbb{Z}^{k}$ and that of  
  	$\mathrm{K}_{0}(B)$ with $\mathbb{Z}^{\ell}$,
  	we have 
  	\begin{equation}\label{remark}
  	\mathrm{K}_{0}(\varphi) \colon  \mathbb{Z}^{k} \to \mathbb{Z}^{\ell}, 
  	\ \  \mathrm{K}_{0}(\varphi)(X)= EX,
  		\end{equation}
  	for any column matrix  $X\in \mathbb{Z}^{k}$.
 
  	Recall that column matrices 
$E_{1}, \ldots, E_k$ 
in $ \mathbb{Z}^{\ell}$
are called
$\mathbb{Z}$-linearly independent
(respectively, $\mathbb{Q}$-linearly independent) if for  
$\alpha_{1}, \ldots, \alpha_{k}$
in $\mathbb{Z}$ (respectively, in $\mathbb{Q}$), $\sum_{j=1}^{k}\alpha_{j}E_{j}=0$ 
implies that $\alpha_{j}=0$ for all $j$.
It is easy to see that  $E_{1}, \ldots, E_k$ are $\mathbb{Z}$-linearly independent
if and only if they are
$\mathbb{Q}$-linearly independent.
Also,  $E_{1}, \ldots, E_k$ are $\mathbb{Q}$-linearly independent
if and only if they are
$\mathbb{R}$-linearly independent.
(In fact, if they are $\mathbb{Q}$-linearly independent then we can find column matrices
$E_{k+1},E_{k+2},\ldots, E_\ell\in \mathbb{Q}^{\ell}$ such that
$\{E_{1}, \ldots, E_\ell\}$ is 
a basis for $\mathbb{Q}^{\ell}$.
Then the matrix formed from  
columns $E_{1}, \ldots, E_\ell$ has nonzero determinant.
So, $E_{1}, \ldots, E_k$ are $\mathbb{R}$-linearly independent.)
We use "linear independence"
to refer to this situation.


 
	\begin{lemma}\label{ddd}
	With the notation of Theorem~\ref{a},
	$\mathrm{K}_{0}(\varphi)$ is injective if and only if $E_{1},\ldots, E_{k}$ 
	are linearly independent, 
	where $E_{1}, \ldots, E_{k}$ are the columns of $E$.	
	\end{lemma}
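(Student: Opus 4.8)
The plan is to reduce the whole statement to the concrete description of $\mathrm{K}_{0}(\varphi)$ recorded in \eqref{remark}. Under the natural identifications $\mathrm{K}_{0}(A)\cong\mathbb{Z}^{k}$ and $\mathrm{K}_{0}(B)\cong\mathbb{Z}^{\ell}$, the homomorphism $\mathrm{K}_{0}(\varphi)$ is precisely left multiplication by the multiplicity matrix $E=R_{\varphi}$, i.e. the map $\mathbb{Z}^{k}\to\mathbb{Z}^{\ell}$, $X\mapsto EX$, whose columns are $E_{1},\ldots,E_{k}$. Since this is a homomorphism of free abelian groups, injectivity is equivalent to triviality of its kernel, so the task is only to identify when $EX=0$ forces $X=0$.

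First I would record, for a column $X=(x_{1},\ldots,x_{k})^{T}\in\mathbb{Z}^{k}$, the elementary identity $EX=\sum_{j=1}^{k}x_{j}E_{j}$; that is, $EX$ is exactly the $\mathbb{Z}$-linear combination of the columns of $E$ with coefficient vector $X$. Hence $X\in\ker\mathrm{K}_{0}(\varphi)$ if and only if $\sum_{j=1}^{k}x_{j}E_{j}=0$. It follows immediately that $\mathrm{K}_{0}(\varphi)$ is injective if and only if the only integer tuple $(x_{1},\ldots,x_{k})$ satisfying $\sum_{j}x_{j}E_{j}=0$ is the zero tuple, i.e. if and only if $E_{1},\ldots,E_{k}$ are $\mathbb{Z}$-linearly independent. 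To conclude, I would invoke the observation made in Section~\ref{sec_pre} that for column vectors in $\mathbb{Z}^{\ell}$ the notions of $\mathbb{Z}$-, $\mathbb{Q}$-, and $\mathbb{R}$-linear independence all coincide, and that this common notion is what the paper calls ``linear independence''; this gives the stated equivalence.

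I do not expect any genuine obstacle here: the lemma is essentially the translation of the definition of injectivity through \eqref{remark}, together with the already-dispatched remark that injectivity of an integer matrix on $\mathbb{Z}^{k}$ is governed by $\mathbb{Z}$-linear independence of its columns, which in turn does not differ from the usual field-theoretic notion. The only place that merits a moment's attention is this last coincidence of independence notions, and since it is proved in the preliminaries the argument is otherwise immediate.
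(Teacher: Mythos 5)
Your proposal is correct and follows essentially the same route as the paper: both reduce the statement via \eqref{remark} to the identity $EX=\sum_{j}x_{j}E_{j}$, so that injectivity of $\mathrm{K}_{0}(\varphi)$ is exactly $\mathbb{Z}$-linear independence of the columns, which the preliminaries have already identified with the common notion of linear independence. No gaps; your write-up just spells out the kernel argument slightly more explicitly than the paper does.
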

	\begin{proof}
Using \eqref{remark}, for each $\alpha_1, \alpha_2, \ldots, \alpha_k \in \mathbb{Z}$, 
we have 
\begin{align*}
	& \mathrm{K}_{0}(\varphi) \left(\begin{smallmatrix}
	\alpha_1\\\alpha_2\\
	\vdots\\
	\alpha_k
	\end{smallmatrix} \right)=\alpha_{1}E_{1}+
	\alpha_{2}E_{2}+ \cdots + \alpha_{k}E_{k},
	\end{align*}
from which the result follows.	 	 	
	\end{proof}
 
Let us recall that a C*-algebra $A$ is said to have  
\emph{Property~(SP)} if any nonzero hereditary
C*-subalgebra 
 of $A$ has a nonzero projection.
A unital C*-algebra $A$ is said to have  
\emph{cancellation of projections} if
for any projections $p,q,e,f\in A$
with $pe=qf=0$, $e\sim f$, and 
$p+e\sim q+f$, then
$p\sim q$.
$A$ is said to have the
\emph{cancellation property}
if $M_{n}(A)$ has cancellation of projections for all $n\in \mathbb{N}$.
If $A$ is not unital, we say that $A$
has each of these
latter two notions whenever so does 
$A^{\sim}$.(see, e.g., \cite[Chapter~3]{Lin01}).

 \section{PROOF OF THE MAIN RESULTS}\label{sec_pf}
 In this section we prove the main results 
 (Theorems~\ref{disconnected}, 
 \ref{thm_inj_sur}, and \ref{main_fd}).
  Further results and
 corollaries will be given in
 the next section.
 
\begin{proof}[Proof of Theorem~\ref{disconnected}]
Since $A$ and $B$ are commutative unital 
C*-algebras, we may assume that
$A=C(X)$ and $B=C(Y)$ for some
compact Hausdorff spaces $X$ and $Y$.
Since $\varphi \colon C(X) \to C(Y)$ is an
injective unital $*$-homomorphism,
there is a surjective continuous
map $h\colon Y\to X$ such that
$\varphi(f)=f\circ h$, for all
$f\in C(X)$
(see, e.g., \cite[Theorem~2.1 and Proposition~2.2]{ag16}).
Also, $\dim(Y)=0$ since 
$C(Y)$ has real rank zero
(by \cite[Proposition~1.1]{L.G.Brown}).
As $Y$ is totally disconnected, the 
map
$$\Delta \colon \mathrm{K}_{0}(C(Y)) \to C(Y,\mathbb{Z}), \ [p]_0 \mapsto \theta_p,$$
is a group isomorphism,
where $\theta_p \colon Y \to \mathbb{Z}$
is defined by $\theta_p(y) = \mathrm{rank}(p(y))$
(see, e.g., \cite[Exercise~3.4]{Rordam00}).
Set $H = \mathrm{K}_{0}(\varphi)(\mathrm{K}_{0}(C(X)))$. We have
\begin{align*}
 H & = \big\{ [\varphi(p)]_0 - [\varphi(q)]_0 : \ \ p,q \in \mathcal{P}_{\infty} (C(X)) \big\} \\
&= \big\{ [p \circ h]_0 - [q \circ h]_0 : \ \ p,q \in \mathcal{P}_{\infty} (C(X)) \big\}.
\end{align*}
Thus,
$ \Delta(H)= \{ \theta_{p \circ h} - \theta_{q \circ h} : \ \ p,q \in \mathcal{P}_{\infty} (C(X)) \} $.
Hence, $ \Delta(H)$ is the subgroup of
$C(Y,\mathbb{Z})$ generated by
the maps $g\colon Y\to \mathbb{Z}$ where
there is some 
$p \in \mathcal{P}_{\infty}(C(X))$
such that $g(y)=\mathrm{rank}(p \circ h)(y)$,
for all $y\in Y$.
We claim that
\begin{equation}\label{mm}
 \Delta(H) = \{f \circ h: \ \ f \in C(X,\mathbb{Z}) \}.
 \end{equation}
 
 To prove the claim, let
 $p \in \mathcal{P}_{\infty}(C(X))$ and let
  $g \colon Y \to \mathbb{Z}$ be defined by
$g(y)=   \mathrm{rank}(p \circ h)(y) $, for all $y\in Y$. Define
 $f \colon X \to \mathbb{Z}$ by
$f(x)= \mathrm{rank}(p(x))$,  
 for all $x\in X$.
Then $g=f\circ h$, and 
hence  
$\Delta(H) \subseteq \{f \circ h: \ \ f \in C(X,\mathbb{Z}) \}$.
For the reverse inclusion,
 let $f \colon X \to \mathbb{Z}$ be a continuous function. We may assume that $f \geq 0$ as $C(X,\mathbb{Z})$ is generated
 by its positive elements. 
 There are distinct nonnegative integers
$n_1, \ldots , n_k$  such that
 $\mathrm{ran}(f) = \{n_1, \ldots , n_k \}$.   Set $U_{i}  = f^{-1}(\{n_i\})$, $ \ i=1, \ldots, k$. Then  $U_i$ is clopen in $X$ and $X = \bigcup_{i=1}^k U_i$
 (disjoint union). We define  $p \in \mathcal{P}_{\infty}(C(X))$ by
$$p = \mathrm{diag}(\chi_{U_1}, \ldots, \chi_{U_1}, \ldots, \chi_{U_k}, \ldots, \chi_{U_k}),$$ 
where for each $i=1,  \ldots k$, the characteristic function $\chi_{U_i}$ repeats $n_i$ times.
Thus
$\mathrm{rank}(p(x)) = n_i = f(x)$
for every $i=1,  \ldots, k$
and every  $ x \in U_i$. So, 
$(f \circ h)(y)= \mathrm{rank}((p \circ h) (y)) = \theta_{p \circ h}(y)$,
for every $y \in Y$.
The claim is proved.

Using \eqref{mm} at the third step,   we get
\[
 \frac{\mathrm{K}_{0}(B)}{\mathrm{K}_{0}(\varphi) (\mathrm{K}_{0}(A))} =
\frac{\mathrm{K}_0(C(Y))}{H} \cong \frac{C(Y,\mathbb{Z})}{\Delta(H)} = \frac{C(Y,\mathbb{Z})}{\{f \circ h: \ \ f \in C(X, \mathbb{Z})  \}}.
\]
We show that the last quotient group is
 torsion free. Let $g \in C(Y,\mathbb{Z})$
  and let $n \in \mathbb{N} $ satisfy $ng=f \circ h$ for some $f \in C(X, \mathbb{Z})$. Thus, $ng(y) = f(h(y))$ for all $y \in Y$. Since $h$ is surjective, this implies that $n$ divides $f(x)$ for all $x \in X$. Set 
$\tilde{f}= \frac{1}{n} f$. Now, 
$\tilde{f} \in C(X, \mathbb{Z})$ and 
$g = \tilde{f} \circ h$. Therefore,
$\frac{\mathrm{K}_{0}(B)}{\mathrm{K}_{0}(\varphi) (\mathrm{K}_{0}(A))}$
 is torsion free.

 For the second part of the statement,
let $A$ have real rank zero.
Then   $X$ is totally disconnected. Let 
$[p]_{0} - [q]_{0} \in \mathrm{K}_{0}(C(X))$ satisfy $\mathrm{K}_{0}(\varphi) ([p]_{0} - [q]_{0}) =0$. Then $[p \circ h]_{0} = [q \circ h]_{0}$. Note that $C(Y)$ has the cancellation property since it
has stable rank one. Thus $p \circ h \sim_u q \circ h$, and so
 $p(h(y)) \sim_u q(h(y))$,
 for all $y \in Y$. In particular,
  $\mathrm{rank}(p(h(y))) = \mathrm{rank}(q(h(y)))$, for all $y \in Y$. Since $h$ is surjective,  we get $\mathrm{rank}(p(x)) =\mathrm{rank}(q(x))$, for all $x \in X$. Using   totally disconnectedness of $X$, 
  we get $[p]_{0} =[q]_{0}$ (see 
    \cite[Exercise~3.4(iii)]{Rordam00}). Hence, $[p]_{0} -[q]_{0}=0$.  
\end{proof}

\begin{proof}[Proof of Theorem~\ref{thm_inj_sur}]
First we prove Part~\eqref{thm_inj_sur_it1}.
Suppose that $A$ has 
 the cancellation property and 
 Property~(SP).
Assume to the contrary that $\varphi$ is not injective.  
Since $\ker(\varphi)$ is a nonzero
closed ideal of $A$, it is
a nonzero hereditary 
C*-subalgebra of $A$. As
$A$ has Property~(SP), 
$\ker(\varphi)$ has a nonzero
 projection $p$. Now, since $A$ has 
 the cancellation property, 
 it follows that
$0 \neq [p]_{0}^{A} \in \mathrm{K}_{0}(A)$.
 Since $\mathrm{K}_{0}(\varphi)([p]^{A}_{0}) =
 [\varphi(p)]_{0} = 0$ and  
 $\mathrm{K}_{0}(\varphi)$ is injective, 
 we deduce that $[p]_{0}^{A} = 0$, 
 which is a contradiction.
 Therefore, $\varphi$ is injective.
 
To prove 
Part~\eqref{thm_inj_sur_it2},
suppose that $A$ has real rank zero
and $\varphi$ is surjective.
First note that for every
$n\in \mathbb{N}$,
the $n$-inflation of  $\varphi$, denoted
again by 
$\varphi\colon M_{n}(A)\to M_{n}(B)$,
is a surjective $*$-homomorphism
and that 
$\mathrm{RR}(M_{n}(A))=0$.
Next, \cite[Theorem~3.14]{L.G.Brown}
implies that  for every  projection 
 $q\in M_{n}(B)$ there exists a projection $p\in  M_{n}(A)$  such that $\varphi (p) = q$. Thus 
 $\varphi([p]_{0})  = [q]_{0}$.  
 Since
 $\mathrm{K}_{0}(B)$ is generated
 (as an abelian group) by
 $ \{[q]_{0}   \colon   q \in \mathcal{P}_{\infty}(B) \}$, it follows that
 $\mathrm{K}_{0}(\varphi)$
is surjective.
To see that the natural map
$\mathrm{K}_{1}(I)\to \mathrm{K}_{1}(A)$
is injective, consider the following 
short exact sequence:
\[
\xymatrix{0\ar[r] 
 &I\ar[r]^-{j} 
 &A\ar[r]^-{\varphi}
 &B\ar[r] & 0,
 }
\]
where $j\colon I\to A$
is the inclusion map.
Using the six term exact sequence in 
K-theory (\cite[Theorem~12.1.2]{Rordam00}),
we see that 
$\mathrm{K}_{1}(j)\colon \mathrm{K}_{1}(I)\to \mathrm{K}_{1}(A)$
is injective.

Part~\eqref{thm_inj_sur_it3} follows essentially
from \cite[Proposition~3.4]{Br19}.
In fact, since $A$ has stable rank one,
$M_n(A)$ has stable rank one,
for all $n\in\mathbb{N}$  \cite{Rie83}.
Hence, $M_n(A)$ has property~$IR$  \cite{FR96},
and so $A$ has stable~$IR$
(see \cite{FR96, Br19} for the definition of
properties $IR$ and stable~$IR$).
Now, \cite[Proposition~3.4]{Br19}
implies that the natural map
$\mathrm{K}_{0}(j)\colon \mathrm{K}_{0}(I)\to \mathrm{K}_{0}(A)$
is injective.
Using the six term exact sequence in 
K-theory, we conclude that
$\mathrm{K}_{1}(\varphi) \colon \mathrm{K}_{1}(A) \to \mathrm{K}_{1}(B)$
is surjective.
\end{proof}	

In general, the injectivity of 
$\mathrm{K}_{0}(\varphi)$ does not 
 imply the injectivity of $\varphi$. For example, if
$ \varphi \colon B(H) \to \mathbb{C}$ is  
the zero homomorphism
where $H$ is an infinite-dimensional
Hilbert space,
 		 then
 		  $\mathrm{K}_{0}(B(H)) = 0$ and 
 		  $\mathrm{K}_{0}(\mathbb{C}) \cong \mathbb{Z}$ 
 		 \cite{Rordam00},
and so  
$\mathrm{K}_{0}(\varphi)$ 
 		  		 is injective, whereas clearly
 		 $\varphi$ is not.  
 		 
Also observe surjectivity of   $\varphi $
does not imply surjectivity of
$\mathrm{K}_{0}(\varphi) $. 
For instance,
consider the $*$-homomorphism
 $\varphi \colon  C[0,1] \to \mathbb{C} \oplus \mathbb{C},\quad 
 f \mapsto (f(0) , f(1)).$
 It is clear that $\varphi$ is surjective but 
 $\mathrm{K}_{0} (\varphi) \colon \mathbb{Z} \to \mathbb{Z} \oplus \mathbb{Z}$
 is not. 
For the converse implication,
let $\psi \colon C([0,1]) \to B(H)$ be the universal representation of $C([0,1])$.
Then  $\psi$ is   not surjective but
  $\mathrm{K}_{0}(\psi)\colon  \mathbb{Z}    
  \to 0$ is surjective. 

\begin{remark}
Part~\eqref{thm_inj_sur_it1} of
Theorem~\ref{thm_inj_sur} holds
in a slightly more general setting.
In fact,
if $\varphi \colon A \to B$ is a  
$*$-homomorphism  such that
\begin{enumerate} 
\item\label{cancelation}
$A$ has the cancellation property, and
\item\label{cancelation2}
for every nonzero ideal $I$ of $A$, 
$I$ is not stably projectionless,
\end{enumerate}
then $\varphi$ is injective
whenever $\mathrm{K}_{0}(\varphi)$ is injective. 
(Recall that a C*-algebra $A$ is called
 \emph{stably projectionless} if $M_n (A)$
  has no nonzero projections, for all $n \in \mathbb{N}$.)
To see this,
suppose that
 $I = \ker (\varphi) \neq 0$. Then by \eqref{cancelation2}, there are
  $n \in \mathbb{N}$ and a nonzero 
projection $p \in M_{n}(I)$. By \eqref{cancelation}, we have $0 \neq [p]_{0}^{A} \in \mathrm{K}_{0}(A)$. 
But, $\mathrm{K}_{0}(\varphi)([p]_{0}^{A}) = [\varphi(p)]_{0}^{B} = 0$, and so
 $[p]_{0}^{A} = 0$, 
 which is a contradiction.  
\end{remark}

Now we prepare to prove Theorem~\ref{main_fd}.
Note that the first two parts of this
theorem  follow from 
Theorem~\ref{thm_inj_sur} since finite-dimensional
C*-algebras have real rank zero and
stable rank one.
 We need the following  results to prove the other parts.

\begin{proposition}\label{prop_tf_fd}
Let $k,\ell\in\mathbb{N}$ with $k\leq \ell$.
Let $E\in M_{\ell\times k}(\mathbb{Z})$
and put
\[
d=\gcd\big(\{\det(F) \colon F
 \text{ is a full sqaure submatrix of}\ E
 \}\big) .
 \]
 Then there exists 
 $K\in M_{k\times \ell}(\mathbb{Z})$
 such that $KE=dI_{k}$.
\end{proposition}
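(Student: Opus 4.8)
The plan is to reduce the statement to two classical facts: the adjugate identity $\mathrm{adj}(F)\,F=\det(F)\,I_k$, valid for any $k\times k$ matrix $F$ over a commutative ring and in particular over $\mathbb{Z}$, together with B\'ezout's identity in $\mathbb{Z}$. First I would enumerate the full square submatrices of $E$, say $F_1,\dots,F_m$ where $m=\binom{\ell}{k}\ge 1$: each $F_j$ keeps all $k$ columns of $E$ and the rows indexed by some $k$-element subset $S_j=\{i_1<\dots<i_k\}$ of $\{1,\dots,\ell\}$. For each $j$, introduce the $0$--$1$ matrix $P_j\in M_{k\times\ell}(\mathbb{Z})$ whose $a$-th row is the standard basis row vector $e_{i_a}^{T}$; by construction $P_jE=F_j$.

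Multiplying the adjugate identity on the right by $P_jE$ and using that $\mathrm{adj}(F_j)$ has integer entries, the matrix $\mathrm{adj}(F_j)P_j\in M_{k\times\ell}(\mathbb{Z})$ satisfies
\[
\big(\mathrm{adj}(F_j)\,P_j\big)\,E \;=\; \mathrm{adj}(F_j)\,F_j \;=\; \det(F_j)\,I_k .
\]
By the definition of $d$ and B\'ezout's identity there are integers $c_1,\dots,c_m$ with $\sum_{j=1}^m c_j\det(F_j)=d$. Setting
\[
K \;=\; \sum_{j=1}^m c_j\,\mathrm{adj}(F_j)\,P_j \;\in\; M_{k\times\ell}(\mathbb{Z}),
\]
one gets $KE=\sum_{j=1}^m c_j\det(F_j)\,I_k=d\,I_k$, as required. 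The degenerate case $d=0$ — equivalently $\mathrm{rank}(E)<k$, so that every $\det(F_j)$ vanishes — is covered by the same formula (and $K=0$ works there too).

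I do not expect a genuine obstacle; the only points needing care are the bookkeeping that $P_jE$ is literally the submatrix $F_j$ with its rows in the original order, and the harmless sign conventions, since $d=\gcd(\cdots)\ge 0$ while the individual minors $\det(F_j)$ may be negative. An equally valid alternative is to invoke the Smith normal form $UEV=\mathrm{diag}(s_1,\dots,s_k)$ with $U\in\mathrm{GL}_\ell(\mathbb{Z})$, $V\in\mathrm{GL}_k(\mathbb{Z})$, $s_1\mid\dots\mid s_k$, and $s_1\cdots s_k=d$ whenever $\mathrm{rank}(E)=k$: then $K=V\,[\,\mathrm{diag}(d/s_1,\dots,d/s_k)\mid 0\,]\,U$ does the job (each $d/s_i=\prod_{j\neq i}s_j$ being an integer), the degenerate case again being trivial. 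Both constructions are effective, which is what is needed for the computational applications in Example~\ref{exa_inv}.
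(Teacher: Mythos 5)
Your proof is correct and is essentially the same as the paper's: your matrices $\mathrm{adj}(F_j)P_j$ coincide with the paper's padded matrices $\tilde{K_r}$ (the adjugate placed in the columns indexed by the rows of $F_r$, zeros elsewhere), and the B\'ezout combination is identical. The Smith normal form remark and the $d=0$ case are fine but optional extras not present in the paper.
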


\begin{proof}
Let $F_{1},F_{2},\ldots,F_{s}\in M_{k}(\mathbb{Z})$
be the full square submatrices of $E$.
We fix $1\leq r \leq s$. Define $\tilde{F_{r}}$
in $ M_{\ell\times k}(\mathbb{Z})$ as follows.
Let $i_1,i_2,\ldots,i_k\in \{1,2,\ldots,\ell\}$
be the numbers of rows of $F_{r}$ in $E$.
Let $\tilde{F_{r}}$ be the matrix
 obtained
from $E$ by replacing every row of $E$
which is not in $F_r$ with  zero,
that is, the $i_t$-th row of $\tilde{F_{r}}$
is the $t$-th row of $ {F_{r}}$
for $1\leq t\leq k$, and the $i$-th row
of $\tilde{F_{r}}$ is zero for
$i\in  \{1 ,\ldots,\ell\}\setminus
\{i_{1},\ldots,i_{k}\}$.

Let $K_r=\mathrm{adj}(F_{r})$
and let 
$\tilde{K_{r}}\in M_{k\times \ell}(\mathbb{Z})$
be the matrix whose $i_{t}$-th column 
is the $t$-th column of $K_r$ and
whose 
 $i$-th column
  is zero for
$i\in  \{1 ,\ldots,\ell\}\setminus
\{i_{1},\ldots,i_{k}\}$.
Since $K_r F_r=\det (F_r)I_k$, it follows 
that 
$\tilde{K_{r}}E=\tilde{K_{r}} \tilde{F_{r}}=\det (F_r)I_k$.
There are $\alpha_{1},\ldots,\alpha_s$
in $\mathbb{Z}$ such that
$\sum_{r=1}^{s}\alpha_{r}\det (F_r)=d$.
Put 
$K=\sum_{r=1}^{s}\alpha_{r}\tilde{K_{r}}$.
Then  
\[
KE=\sum_{r=1}^{s}\alpha_{r}\tilde{K_{r}}E
=\sum_{r=1}^{s}\alpha_{r}\tilde{K_{r}}
\tilde{F_{r}}
=\sum_{r=1}^{s}\alpha_{r} \det (F_r) I_{k}
=d I_{k}.\qedhere
\]
\end{proof}

The equivalence of Parts \eqref{prop_nonsq_it1}
and \eqref{prop_nonsq_it3} in the following
proposition
should be known in Matrix Theory,
however, we did not find any reference.
Our proof uses torsion freeness
which is surely new.

\begin{proposition}\label{prop_nonsq}
Let $k,\ell \in \mathbb{N}$ with
$k\leq \ell$. Let
$E\in M_{\ell \times k}(\mathbb{Z})$
and consider the group homomorphism
$h\colon \mathbb{Z}^{k}\to \mathbb{Z}^{\ell}$ defined by $h(X)=EX$, for every column
matrix $X\in\mathbb{Z}^{k}$.
Put $H=\mathrm{ran}(h)$.
Then the following statements are equivalent:
\begin{enumerate}
\item\label{prop_nonsq_it1}
there is $K \in M_{k \times \ell}(\mathbb{Z})$ such that 
$KE=I_{k}$;

\item\label{prop_nonsq_it2}
 the quotient group
$\frac{\mathbb{Z}^{\ell}}{H}$
is torsion free and
$h$ is injective;
\item\label{prop_nonsq_it3}
$
\gcd\big(\{\det(F) \colon F
 \text{ is a full sqaure submatrix of}\ E
 \}\big)=1.
$
\end{enumerate}
\end{proposition}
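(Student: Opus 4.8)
The plan is to prove the cycle of implications $\eqref{prop_nonsq_it1}\Rightarrow\eqref{prop_nonsq_it2}\Rightarrow\eqref{prop_nonsq_it3}\Rightarrow\eqref{prop_nonsq_it1}$, using Proposition~\ref{prop_tf_fd} to close the loop. The implication $\eqref{prop_nonsq_it3}\Rightarrow\eqref{prop_nonsq_it1}$ is immediate: by Proposition~\ref{prop_tf_fd} there is $K\in M_{k\times\ell}(\mathbb{Z})$ with $KE=dI_{k}$, and $d=1$ under hypothesis~\eqref{prop_nonsq_it3}, so $KE=I_{k}$. For $\eqref{prop_nonsq_it1}\Rightarrow\eqref{prop_nonsq_it2}$: if $KE=I_{k}$ then $h$ is injective, since $EX=0$ forces $X=KEX=0$. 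For torsion freeness, suppose $n\bar{v}=0$ in $\mathbb{Z}^{\ell}/H$ with $n\in\mathbb{N}$ and $v\in\mathbb{Z}^{\ell}$; then $nv=EX$ for some $X\in\mathbb{Z}^{k}$, so $Knv=KEX=X$, i.e. $n$ divides every entry of $X$. Writing $X=n\tilde{X}$ with $\tilde{X}\in\mathbb{Z}^{k}$ gives $nv=E(n\tilde{X})=nE\tilde{X}$, hence $v=E\tilde{X}\in H$, so $\bar{v}=0$.

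The remaining implication $\eqref{prop_nonsq_it2}\Rightarrow\eqref{prop_nonsq_it3}$ is where the real work lies. Here I would argue by contradiction: assume $d=\gcd\{\det(F)\}>1$ (note $d\neq 0$, since injectivity of $h$ means $E$ has rank $k$, so some full square submatrix has nonzero determinant), and let $p$ be a prime dividing $d$. The goal is to produce a nonzero torsion element in $\mathbb{Z}^{\ell}/H$, i.e. a column $v\in\mathbb{Z}^{\ell}\setminus H$ with $pv\in H$. The natural way to find such a $v$ is to reduce mod $p$: consider $\bar{E}\in M_{\ell\times k}(\mathbb{F}_{p})$, the reduction of $E$. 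Since every $k\times k$ minor of $E$ is divisible by $p$, the matrix $\bar{E}$ has rank $<k$ over $\mathbb{F}_{p}$, so its columns are linearly dependent over $\mathbb{F}_{p}$. Equivalently, there is a column $X\in\mathbb{Z}^{k}$, not all of whose entries are divisible by $p$, with $EX\equiv 0\pmod p$; write $EX=pv$ with $v\in\mathbb{Z}^{\ell}$. Then $pv=EX\in H$, and it remains to check $v\notin H$. If $v=EY$ for some $Y\in\mathbb{Z}^{k}$, then $EX=pEY=E(pY)$, and injectivity of $h$ (cancellation in the torsion-free group $\mathbb{Z}^{k}$, or just that $E$ has trivial kernel over $\mathbb{Q}$) gives $X=pY$, contradicting that $p$ does not divide all entries of $X$. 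Hence $\bar{v}$ is a nonzero torsion element of $\mathbb{Z}^{\ell}/H$, contradicting~\eqref{prop_nonsq_it2}.

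The main obstacle I anticipate is the step inside $\eqref{prop_nonsq_it2}\Rightarrow\eqref{prop_nonsq_it3}$ connecting divisibility of all $k\times k$ minors by $p$ to rank-deficiency of $\bar{E}$ over $\mathbb{F}_{p}$: one must know that the rank of a matrix over a field equals the largest size of a nonvanishing minor, and that mod-$p$ reduction is a ring homomorphism so that $\overline{\det(F)}=\det(\bar F)$. This is standard linear algebra, but it is the conceptual crux — the rest is bookkeeping with columns and the definition of the quotient group. An alternative, slicker route to $\eqref{prop_nonsq_it2}\Rightarrow\eqref{prop_nonsq_it3}$ would use Smith normal form: write $E=UDV$ with $U\in\mathrm{GL}_{\ell}(\mathbb{Z})$, $V\in\mathrm{GL}_{k}(\mathbb{Z})$, and $D$ diagonal with invariant factors $d_{1}\mid d_{2}\mid\cdots\mid d_{k}$; then $\mathbb{Z}^{\ell}/H\cong\mathbb{Z}/d_{1}\oplus\cdots\oplus\mathbb{Z}/d_{k}\oplus\mathbb{Z}^{\ell-k}$, injectivity of $h$ says all $d_{i}\neq 0$, torsion freeness says all $d_{i}=1$, and $d=d_{1}\cdots d_{k}$ equals the gcd of the $k\times k$ minors — giving $d=1$ directly. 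I would likely present the elementary mod-$p$ argument in the main text and perhaps remark on the Smith normal form viewpoint, since it also makes transparent why \eqref{prop_nonsq_it1} holds.
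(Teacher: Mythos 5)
Your proposal is correct, and for the two easy implications, namely \eqref{prop_nonsq_it3}$\Rightarrow$\eqref{prop_nonsq_it1} via Proposition~\ref{prop_tf_fd} and \eqref{prop_nonsq_it1}$\Rightarrow$\eqref{prop_nonsq_it2} via $X=KEX$, it coincides with the paper's argument. The genuine difference is in \eqref{prop_nonsq_it2}$\Rightarrow$\eqref{prop_nonsq_it3}. You argue by contradiction with a prime $p$ dividing $d$: reducing $E$ modulo $p$, all $k\times k$ minors vanish in $\mathbb{F}_{p}$, so $\bar{E}$ has rank $<k$, which yields $X\in\mathbb{Z}^{k}$ with $p\nmid X$ and $EX=pv$; then $pv\in H$ but $v\notin H$ (by injectivity of $h$, since $v=EY$ would force $X=pY$), producing a nonzero torsion element and contradicting \eqref{prop_nonsq_it2}. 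The paper instead keeps everything over $\mathbb{Z}$ and proves, by downward induction on the size of submatrices, that $d$ divides the determinant of \emph{every} square submatrix of $E$: for an $r\times r$ submatrix it forms the column vector of cofactors of a bordering $(r+1)\times(r+1)$ submatrix, notes that the entries of $Y=EX$ are $(r+1)\times(r+1)$ minors divisible by $d$, uses torsion freeness to write $\tfrac1d Y=EW$, and then applies Proposition~\ref{prop_tf_fd} ($KE=dI_k$) to conclude $d\mid X$, hence $d\mid\det(F)$; from $d\mid a_{i,j}$ it gets $d^{k}\mid d$, so $d=1$ when $k>1$, with a separate short argument for $k=1$. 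Your mod-$p$ route (and the Smith normal form remark, which is also valid and makes both the torsion and the minor-gcd computation transparent) is shorter, avoids the induction and the special case $k=1$, and rests only on the standard fact that rank over a field is detected by nonvanishing minors; the paper's route is more laborious but stays entirely within integer matrix manipulations, reuses Proposition~\ref{prop_tf_fd} in a way that supports the constructive inverse computation illustrated in Example~\ref{exa_inv}, and is the argument the authors single out as their "torsion-freeness" method. Either way the logical content is the same, and your proof is complete.
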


\begin{proof} 
 \eqref{prop_nonsq_it3}$\Rightarrow$\eqref{prop_nonsq_it1}:
 This
follows  from Proposition~\ref{prop_tf_fd}.

\eqref{prop_nonsq_it1}$\Rightarrow$\eqref{prop_nonsq_it2}: 
 First, 
using $KE=I_{k}$, it is easy to see that
$h$ is injective.
To show that $\frac{\mathbb{Z}^{\ell}}{H}$ is torsion free,
let $Y\in  \mathbb{Z}^\ell$ be a
column  matrix such that
$nY\in H$ for some $n\in \mathbb{N}$.
We have to show that $Y\in H$.
There is $X\in\mathbb{Z}^{k}$
such that $EX=nY$. Then
$X=KEX=nKY$. In particular,
$\tfrac{1}{n}X=KY$ is an integer
column matrix.
Thus $Y=E\cdot \tfrac{1}{n}X \in H$.
Hence, 
$\frac{\mathbb{Z}^{\ell}}{H}$ is torsion free.

\eqref{prop_nonsq_it2}$\Rightarrow$\eqref{prop_nonsq_it3}: 
First, note that the injectivity of $h$
implies that the columns of $E$ are linearly
independent
(see Lemma~\ref{ddd}). In particular,
the rank of $E$ (as a matrix in
$ M_{\ell \times k}(\mathbb{Q})$) is $k$.
Thus there is at least one full square
submatrix $F$ of $E$ with
$\det(F)\neq 0$. Put
\[
d=\gcd\big(\{\det(F) \colon F
 \text{ is a full sqaure submatrix of}\ E
 \}\big) .
 \]
 Then $d\neq 0$. We claim that $d$ divides
 the determinant of every (not necessarily
 full) square submatrix of $E$.
 We use induction to prove the claim.
 Let $F$ be an $r\times r$ square submatrix
 of $E$. If $r=k$ then by the definition of $d$
 we have $d\,|\det(F)$.
 Suppose that $1\leq r<k$ and that
 $d$ divides the determinant of every
  $(r+1)\times (r+1)$  square submatrix of $E$. Let $F$ be an arbitrary
   $r\times r$ square submatrix of $E$.
   We show that $d\,|\det(F)$.
   Let $i_{1}, i_{2}\ldots,i_{r}$ be the
   number of rows of $F$ and let
   $j_{1}, j_{2}\ldots,j_{r}$ be the
   number of columns of $F$ as a submatrix
   of $E$.
Without loss of generality, assume
that $i_{1}=j_{1}=2$, $i_{2}=j_{2}=3$,
\ldots, $i_{r}=j_{r}=r+1$.
(The general case can be proved in a similar
way. Alternatively, observe that by interchanging 
rows and columns, we can reduce  
the general case to this special case.)

Write $E=(a_{i,j})$. Let $G$ be the
 $(r+1)\times (r+1)$  square submatrix of $E$
 which is on the upper-left corner of $E$,
 that is, $G=(a_{i,j})_{1\leq i,j\leq r+1}$.
 Let $G_{i,j}$ be the matrix obtained
 by deleting the $i$-th row and $j$-th
 column from $G$. For each
 $1\leq i\leq r+1$, let $x_i$ be the
 $(1,i)$~cofactor of $G$, that is,
 $x_i=(-1)^{1+i}\det(G_{1,i})$.
 In particular, $G_{1,1}=F$ and 
 $x_1=\det(F)$. Set 
 $x_{r+2}=x_{r+3}=\cdots=x_{k}=0$.
 Let $X=(x_{1}, x_{2},\ldots,x_{k})$
 (as a column matrix).
 Put $Y=EX$. Observe that, if we write
 $Y=(y_{1}, y_{2},\ldots,y_{\ell})$
 then $y_i=\det(G_i)$
 where
  $G_i$ is obtained from $G$ by replacing
  the  first row of $G$ with
  $(a_{i,1},a_{i,2},\ldots,a_{i,r+1})$,
  for every $i=1\ldots,\ell$.
 Clearly, $y_{2}=y_{ 3}=\cdots=y_{r+1}=0$.
By the induction hypothesis,
$d\,| y_{i}$ for $i=1, r+2,r+3,\ldots,\ell$.
Thus, $d\,|Y$.
Set $Z=\frac{1}{d}Y$. So, 
$Z\in \mathbb{Z}^{\ell}$ and
$dZ=Y=EX\in H$. Since 
$\frac{\mathbb{Z}^{\ell}}{H}$
is torsion free, we get $Z\in H$.
Hence $Z=EW$ for some $W\in \mathbb{Z}^{k}$.
We have
\begin{equation}\label{equ_tf}
EX=Y=dZ=dEW.
\end{equation}
On the other hand, by 
Proposition~\ref{prop_tf_fd}, there
is  $K\in M_{k\times \ell}(\mathbb{Z})$
 such that $KE=dI_{k}$.
Using this and \eqref{equ_tf}, we obtain
$X=dW$. In particular, $d\,|X$, and so
$d\,|x_{1}=\det(F)$. The claim is proved.

By the claim, $d\,|a_{i,j}$ for every
entry $a_{i,j}$ of $E$.
Thus, $d^{k}\,|\det (F)$ for every
full square submatrix $F$ of $E$
(since $F$ is a $k\times k$ integer matrix).
Hence, $d^{k}\,|d$. If $k>1$, this implies
that $d=1$, as desired. Suppose that $k=1$,
i.e.,  $E$ is a column matrix, say
$(a_1,\ldots,a_{\ell})$. Then $E\in H$.
Put $X=\frac{1}{d}E$.
 Then $dX\in H$, and since 
 $\frac{\mathbb{Z}^{\ell}}{H}$
is torsion free, we get $X\in H$.
So, $(\frac{a_{1}}{d},\ldots,\frac{a_{\ell}}{d})=
b(a_1,\ldots,a_{\ell})$ for some integer $b$. It follows that $d=1$.
This finishes the proof.
\end{proof}

\begin{proof}[Proof of Theorem~\ref{main_fd}]
As stated before 
Proposition~\ref{prop_tf_fd}, 
the first two parts of the statement
follow from Theorem~\ref{thm_inj_sur}.
For the third part,
 suppose that 
  $\mathrm{K}_{0}(\varphi)$ is injective.
 Recall from Section~\ref{sec_pre} that
$\mathrm{K}_{0}(\varphi) \colon  \mathbb{Z}^{k} \to \mathbb{Z}^{\ell}$ has the formula
$\mathrm{K}_{0}(\varphi)(X)=EX$ for
every $X\in M_{k\times 1}(\mathbb{Z})$.
 By Lemma~\ref{ddd},
 the columns of $E$ are linearly
 independent. In particular,
 $k\leq \ell$.
 Now, the statement
 follows from
 Proposition~\ref{prop_nonsq}.
\end{proof}

\section{FURTHER RESULTS AND APPLICATIONS}\label{sec_further}
 
 In this section we give results
 which are related to the main results
 of Section~\ref{sec_intro}.
 
It seems that the following result 
does not follow from the classification
results as we do not assume that
$A$ is simple, amenable, or $\mathcal{Z}$-stable.
 
 \begin{proposition}\label{prop_emb}
 Every C*-algebra $A$ with real rank
 zero and stable rank one
 whose $\mathrm{K}_0$-group
 is a subgroup of the rationals
 is embeddable into a UHF~algebra.
 \end{proposition}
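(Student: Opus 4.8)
The plan is to reduce the statement to Theorem~\ref{thm_inj_sur}\eqref{thm_inj_sur_it1} and then to build a single $*$-homomorphism witnessing the embedding. Since $A$ has stable rank one and real rank zero, Theorem~\ref{thm_inj_sur}\eqref{thm_inj_sur_it1} applies to every $*$-homomorphism with domain $A$: such a map is injective as soon as it induces an injective homomorphism on $\mathrm{K}_0$. Hence it suffices to exhibit \emph{one} UHF algebra $M$ together with a unital $*$-homomorphism $\varphi\colon A\to M$ for which $\mathrm{K}_{0}(\varphi)$ is injective; the desired embedding is then $\varphi$ itself. We treat $A$ as unital, so that $\mathrm{K}_0(A)$ comes with the order unit $[1_A]_0$.

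Next I would identify the target. By hypothesis $G:=\mathrm{K}_0(A)$ is a subgroup of $\mathbb{Q}$, hence torsion free of rank at most one; together with its positive cone and order unit $[1_A]_0$ it is a dimension group with order unit, since every subgroup of $\mathbb{Q}$ is totally ordered and therefore unperforated with the Riesz interpolation property. After rescaling by a positive rational we may assume $[1_A]_0=1$ and $G=\bigcup_{n\ge 1}\tfrac1{d_n}\mathbb{Z}$ with $d_1\mid d_2\mid\cdots$. Let $M=\varinjlim\bigl(M_{d_n},\iota_n\bigr)$ be the corresponding Glimm algebra, $\iota_n$ being the unital embedding of multiplicity $d_{n+1}/d_n$. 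Then $M$ is a UHF algebra (a matrix algebra, or $\mathbb{C}$, in the degenerate cases where $d_n$ is eventually constant, or eventually equal to $1$, which causes no difficulty), $\mathrm{K}_1(M)=0$, and the ordered group $\mathrm{K}_0(M)$ with order unit $[1_M]_0$ is isomorphic to $(G,G^+,1)$. (Alternatively one may take $M$ to be the universal UHF algebra, with $\mathrm{K}_0(M)=\mathbb{Q}\supseteq G$.)

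The decisive remaining step is to produce a unital $*$-homomorphism $\varphi\colon A\to M$ which on $\mathrm{K}_0$ is a positive multiple of the inclusion $G\hookrightarrow\mathrm{K}_0(M)$; such a $\varphi$ automatically has $\mathrm{K}_{0}(\varphi)$ injective, and then the first paragraph finishes the proof. To construct $\varphi$ I would use real rank zero and stable rank one to extract finite-dimensional structure from $A$: for each $n$ the element $\tfrac1{d_n}[1_A]_0\in\mathrm{K}_0(A)^{+}$ is represented by a projection of $A$ dominated by $1_A$, and iterating the cancellation property splits $1_A$ into $d_n$ mutually orthogonal, Murray--von~Neumann equivalent projections, i.e.\ yields a unital copy of $M_{d_n}$ inside $A$ with its matrix units. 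The main obstacle, as I see it, is to organize these finite-dimensional pieces into an approximate intertwining between $A$ and the inductive system $\bigl(M_{d_n},\iota_n\bigr)$ defining $M$ --- equivalently, to choose almost multiplicative and almost isometric maps $A\to M_{d_n}$ compatible with the connecting maps $\iota_n$ --- so that a limit $*$-homomorphism $\varphi\colon A\to M$ exists. This is exactly the point at which one must avoid the classification theorems (as emphasized just before the statement), relying instead on real rank zero and stable rank one together with the strong rigidity forced by $\mathrm{K}_0(A)$ being a subgroup of $\mathbb{Q}$ (for instance, every tracial state of $A$ induces the same, injective, map $G\hookrightarrow\mathbb{R}$), which should keep the target $M$ rigid enough for the intertwining to close.
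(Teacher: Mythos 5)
Your opening reduction is sound and is surely the intended one: since real rank zero and stable rank one give Property~(SP) and cancellation, Theorem~\ref{thm_inj_sur}\eqref{thm_inj_sur_it1} shows that any $*$-homomorphism $\varphi\colon A\to M$ into a UHF~algebra with $\mathrm{K}_0(\varphi)$ injective is itself injective, so it suffices to produce one such $\varphi$. (For what it is worth, the paper's printed proof is only a fragment --- it introduces the universal UHF~algebra $\mathcal{Q}$ and stops --- so there is no complete argument to compare yours against; the choice of target in your second paragraph matches that fragment.)

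The genuine gap is that you never construct $\varphi$, and the step you defer is not a technicality but the entire content of the proposition. Two concrete problems. First, the finite-dimensional structure you extract from $A$ --- splitting $1_A$ into $d_n$ orthogonal equivalent projections --- produces unital embeddings $M_{d_n}\hookrightarrow A$, i.e.\ maps in the \emph{wrong direction}; an intertwining with the system $(M_{d_n},\iota_n)$ defining $M$ requires (almost multiplicative, almost isometric) maps $A\to M_{d_n}$, and nothing in the hypotheses furnishes such finite-dimensional approximations of $A$ itself: $A$ is not assumed AF, nuclear, exact, or quasidiagonal, so there is no existence theorem to invoke and the "rigidity" you appeal to (uniqueness of the trace pairing on $\mathrm{K}_0$) controls at most uniqueness, not existence, of the maps needed to close the intertwining. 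Indeed, any C*-subalgebra of a UHF~algebra is exact, quasidiagonal and carries a faithful trace, so whatever argument completes the proof must smuggle in such structural information; your sketch does not. Second, a smaller but real issue: the hypothesis only says $\mathrm{K}_0(A)$ is \emph{group}-isomorphic to a subgroup $G\subseteq\mathbb{Q}$, whereas your construction of $M$ and of the putative $\mathrm{K}_0$-map uses that the order on $\mathrm{K}_0(A)$ (with the class of the unit as order unit) is the one inherited from $\mathbb{Q}$; that identification of ordered groups needs an argument from real rank zero and stable rank one rather than the bare statement that subgroups of $\mathbb{Q}$ are totally ordered. As written, then, the proposal is an accurate reduction plus a statement of the hard problem, not a proof.
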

 
 \begin{proof}
 Let $\mathcal{Q}$ denote the universal
 UHF~algebra, that is, the unique
 unital AF~algebra $B$ with
 $\mathrm{K}_{0}(B)\cong \mathbb{Q}$,
 $\mathrm{K}_{0}(B)_{+}\cong \mathbb{Q}_{+}$,
 and $[1_{B}]\cong 1$. 
 
 \end{proof}

In the following result which is a consequence of Theorem~\ref{disconnected},
 \textbf{C} denotes
the Cantor set. Recall that for every
compact metrizable space $X$, there
is a surjective continuous map
 $f\colon \textbf{C} \to X$.

\begin{proposition}\label{prop_cantor}
Let $f\colon \textbf{C} \to X$ be a surjective
continuous map from a compact metrizable space $X$
to the Cantor set, and let
$\varphi\colon C(X)\to C(\textbf{C})$,
$\varphi(g)=g\circ f$, be the
natural induced
homomorphism. Then the quotient group
$\frac{\mathrm{K}_{0}(C(\textbf{C}))}{\mathrm{K}_{0}(\varphi) (\mathrm{K}_{0}(C(X)))}$ is 
torsion free.
If moreover $\dim(X)=0$ then 
$\mathrm{K}_{0}(\varphi)$ is injective.
\end{proposition}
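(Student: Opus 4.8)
The plan is to recognize this as a direct application of Theorem~\ref{disconnected}, once we verify the hypotheses. First I would observe that $C(\textbf{C})$ and $C(X)$ are commutative unital C*-algebras, and that $\varphi(g)=g\circ f$ is a unital $*$-homomorphism (it sends the constant function $1$ to the constant function $1$). The key point to check is injectivity of $\varphi$: this follows from surjectivity of $f$, since if $g\circ f=0$ then $g$ vanishes on $f(\textbf{C})=X$, hence $g=0$. Next, $C(\textbf{C})$ has real rank zero because the Cantor set is totally disconnected (equivalently, zero-dimensional); this is the standard fact that $\mathrm{RR}(C(Y))=0$ iff $\dim(Y)=0$, cited in the proof of Theorem~\ref{disconnected} via \cite[Proposition~1.1]{L.G.Brown}. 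With $A=C(X)$, $B=C(\textbf{C})$, $\varphi$ injective and unital, and $B$ of real rank zero, Theorem~\ref{disconnected} immediately gives that $\frac{\mathrm{K}_{0}(C(\textbf{C}))}{\mathrm{K}_{0}(\varphi)(\mathrm{K}_{0}(C(X)))}$ is torsion free.

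For the second assertion, if moreover $\dim(X)=0$, then $C(X)$ has real rank zero (again by the dimension characterization), so $A=C(X)$ has real rank zero; the second part of Theorem~\ref{disconnected} then yields that $\mathrm{K}_{0}(\varphi)$ is injective.

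I do not anticipate a serious obstacle here, since the statement is essentially a specialization of Theorem~\ref{disconnected} to $Y=\textbf{C}$ together with the remark, recalled just before the proposition, that every compact metrizable space admits a surjection from the Cantor set (so that such $f$ exists and the setup is nonvacuous). The only small points requiring care are the translation of ``totally disconnected / zero-dimensional'' into ``real rank zero'' for the relevant function algebras, and noting that $f$ being surjective is exactly what makes $\varphi$ injective via the Gelfand correspondence. One might also remark that, as in the discussion following Theorem~\ref{disconnected}, the map $h$ provided by Gelfand duality for $\varphi$ is precisely $f$ itself, so no additional identification is needed.
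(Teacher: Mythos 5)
Your proposal is correct and follows essentially the same route as the paper: injectivity of $\varphi$ from surjectivity of $f$, real rank zero of $C(\textbf{C})$ from $\dim(\textbf{C})=0$ via \cite[Proposition~1.1]{L.G.Brown}, and then a direct application of Theorem~\ref{disconnected} (including its second part for the case $\dim(X)=0$). The only difference is cosmetic: the paper also notes stable rank one of $C(\textbf{C})$, which is not actually needed for Theorem~\ref{disconnected}, while you additionally spell out unitality of $\varphi$, which the paper leaves implicit.
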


\begin{proof}
First note that, since $f$ is surjective,
$\varphi$ is injective. Also,
as $\dim(\textbf{C})=0$, $C(\textbf{C})$ has real rank zero
and stable rank one,
by \cite[Proposition~1.1]{L.G.Brown}
and \cite[Proposition~1.7]{Rie83}.
Now, the statement follows from 
Theorem~\ref{disconnected}.
\end{proof}

 The following is a consequence of
  Theorem~\ref{thm_inj_sur}
and the fact that AF~algebras have real 
rank zero and stable rank one.
 
 \begin{corollary}\label{cor_inj_af}
Let $A$ be an AF~algebra,
let  $B$ be a C*-algebra, and let 
$\varphi \colon  A \to B$ be a 
	$*$-homomorphism. If 
$\mathrm{K}_{0}(\varphi)$ 
	is injective then 
	so is $\varphi$.
If $\varphi$ is surjective then so is
$\mathrm{K}_{0}(\varphi)$.	
\end{corollary}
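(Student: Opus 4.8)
The plan is to derive both assertions directly from Theorem~\ref{thm_inj_sur} by checking that an AF~algebra meets the relevant hypotheses. First I would recall the two structural facts about an arbitrary AF~algebra $A$: it has real rank zero, and it has stable rank one. The first is classical, since $A$ is an inductive limit of finite-dimensional C*-algebras (which have real rank zero) and real rank zero is preserved under such limits. The second holds because finite-dimensional C*-algebras have stable rank one and stable rank one passes to inductive limits, as in \cite{Rie83}.

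Next I would observe that these two properties are exactly what Theorem~\ref{thm_inj_sur}\eqref{thm_inj_sur_it1} requires: stable rank one implies the cancellation property, and real rank zero implies Property~(SP), since every nonzero hereditary C*-subalgebra of a real-rank-zero C*-algebra again has real rank zero and hence contains a nonzero projection. (The parenthetical remark in Theorem~\ref{thm_inj_sur}\eqref{thm_inj_sur_it1} already records precisely this implication.) Consequently, if $\mathrm{K}_{0}(\varphi)$ is injective, then $\varphi$ is injective. For the second statement, only real rank zero of $A$ is needed: Theorem~\ref{thm_inj_sur}\eqref{thm_inj_sur_it2} gives that if $A$ has real rank zero and $\varphi$ is surjective, then $\mathrm{K}_{0}(\varphi)$ is surjective, and AF~algebras have real rank zero.

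There is no genuine obstacle here; all the content sits in Theorem~\ref{thm_inj_sur}, and the proof is just the assembly of the standard facts "AF $\Rightarrow$ real rank zero and stable rank one", "stable rank one $\Rightarrow$ cancellation property", and "real rank zero $\Rightarrow$ Property~(SP)". If one wanted to be fully self-contained, the only mildly non-routine point to spell out would be the last implication, but it follows immediately from the permanence of real rank zero under passage to hereditary subalgebras together with the existence of an approximate unit of projections in a real-rank-zero C*-algebra.
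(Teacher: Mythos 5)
Your proposal is correct and follows essentially the same route as the paper, which also deduces the corollary from Theorem~\ref{thm_inj_sur} together with the standard facts that AF~algebras have real rank zero and stable rank one. The extra details you supply (stable rank one implies cancellation, real rank zero implies Property~(SP)) are exactly the implications already invoked in the parenthetical of Theorem~\ref{thm_inj_sur}\eqref{thm_inj_sur_it1}, so nothing differs in substance.
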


\begin{corollary}
Let $X$ be a second countable, locally
 compact, Hausdorff space with 
 $\mathrm{dim}(X) =0$. 
 Let $B$ a C*-algebra and let
  $\varphi \colon C_{0}(X) \to B$ be a
   $*$-homomorphism. If $\mathrm{K}_{0}(\varphi)$ is injective  then so
    is $\varphi$.
    If $\varphi$ is surjective then so is
$\mathrm{K}_{0}(\varphi)$.
\end{corollary}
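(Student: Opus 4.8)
The plan is to deduce both assertions directly from Theorem~\ref{thm_inj_sur}, in the same way that Corollary~\ref{cor_inj_af} is deduced for AF~algebras; the only thing to verify is that $A=C_{0}(X)$ meets the hypotheses of that theorem. First I would recall that for a locally compact Hausdorff space the covering dimension governs the real rank and the stable rank of the associated function algebra: when $\dim(X)=0$ one has $\mathrm{RR}(C_{0}(X))=0$ by \cite[Proposition~1.1]{L.G.Brown}, and $\mathrm{sr}(C_{0}(X))=1$ by \cite[Proposition~1.7]{Rie83}. These are exactly the two facts already used in the proof of Proposition~\ref{prop_cantor}, now applied in the possibly non-unital setting. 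Hence $C_{0}(X)$ has stable rank one and real rank zero.

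Granting this, the first claim is immediate from Theorem~\ref{thm_inj_sur}\eqref{thm_inj_sur_it1}: by the parenthetical part of its hypothesis, a C*-algebra with stable rank one and real rank zero automatically has the cancellation property and Property~(SP), so injectivity of $\mathrm{K}_{0}(\varphi)$ forces injectivity of $\varphi$. The second claim is immediate from Theorem~\ref{thm_inj_sur}\eqref{thm_inj_sur_it2}, which needs only $\mathrm{RR}(C_{0}(X))=0$; surjectivity of $\varphi$ then yields surjectivity of $\mathrm{K}_{0}(\varphi)$ (the extra conclusion of that part, about the natural map $\mathrm{K}_{1}(\ker\varphi)\to\mathrm{K}_{1}(C_{0}(X))$, is not needed here). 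Second countability of $X$ is only used to guarantee that the cited real-rank and stable-rank statements apply.

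I do not expect a genuine obstacle; the whole content is the passage from $\dim(X)=0$ to these two rank conditions. As a safeguard, and because the corollary is billed as a consequence of the AF~case, I would also record the alternative argument: under the hypotheses $X$ is second countable, locally compact, Hausdorff and totally disconnected, hence $\sigma$-compact with a countable base of compact open sets, and writing $X$ as an increasing union of compact open subsets exhibits $C_{0}(X)$ as an inductive limit of finite-dimensional commutative C*-algebras, so $C_{0}(X)$ is an AF~algebra and Corollary~\ref{cor_inj_af} applies verbatim. Either route reduces the statement to results already established in the paper.
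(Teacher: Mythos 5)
Your proposal is correct, and in fact it contains the paper's proof as your ``alternative'' route: the paper simply observes that the hypotheses force $C_{0}(X)$ to be an AF~algebra and then invokes Corollary~\ref{cor_inj_af}. Your primary route is a mild variant that bypasses AF-ness altogether: you pass from $\dim(X)=0$ to $\mathrm{RR}(C_{0}(X))=0$ and $\mathrm{sr}(C_{0}(X))=1$ (via the unitization $C(X^{+})$, where the one-point compactification $X^{+}$ is again zero-dimensional for such $X$) and feed these directly into Theorem~\ref{thm_inj_sur}\eqref{thm_inj_sur_it1} and \eqref{thm_inj_sur_it2}. Since Corollary~\ref{cor_inj_af} is itself deduced from Theorem~\ref{thm_inj_sur} by exactly these two rank facts for AF~algebras, the two routes rest on the same theorem; what your direct route buys is that it avoids having to verify that $C_{0}(X)$ is AF, which is the only nontrivial content of the paper's two-line proof, and it makes clear that second countability is needed only to make the dimension-to-rank citations (or, in the other route, separability for AF-ness) applicable. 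One small imprecision in your safeguard argument: writing $X$ as an increasing union of compact open sets exhibits $C_{0}(X)$ as an inductive limit of the algebras $C(K_{n})$, which are AF but not finite-dimensional; to get finite-dimensional building blocks you should pass further to finite clopen partitions of the $K_{n}$ (or invoke the local characterization of AF~algebras). This does not affect the validity of either route.
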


\begin{proof}
The assumptions on $X$ imply that 
$C_0(X)$ is an AF~algebra. Now, 
use Corollary~\ref{cor_inj_af}. 
\end{proof}

 In the third part of Theorem~\ref{main_fd},
 if $\mathrm{K}_{0}(\varphi)$ is not injective, there is still  a necessary
 condition for torsion freeness of the
 quotient group as follows.

 \begin{proposition}\label{prop_tf}
  Let 
	$A =M_{n_1}(\mathbb{C})\oplus \cdots \oplus M_{n_k}(\mathbb{C})$
	and
	$B=M_{m_1}(\mathbb{C})\oplus \cdots \oplus M_{m_\ell}(\mathbb{C})$	
	be finite-dimensional 
    C*-algebras, and let
	$\varphi \colon A \rightarrow B$
	be a nonzero $*$-homomorphism with multiplicity
matrix $E$.	
\begin{enumerate}
\item\label{prop_tf_it1}
If $\frac{\mathrm{K}_{0}(B)}{\mathrm{K}_{0}(\varphi) (\mathrm{K}_{0}(A))}$ is torsion
then $\gcd(E)=1$.
\item\label{prop_tf_it2}
If $\frac{\mathrm{K}_{0}(B)}{\mathrm{K}_{0}(\varphi) (\mathrm{K}_{0}(A))}$ is torsion
free
and $\mathrm{K}_{0}(\varphi)$ is injective
then 
$\gcd(E_{j}) =1$ for every
column $E_j$ of $E$.
\end{enumerate} 
\end{proposition}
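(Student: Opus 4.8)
The plan is to prove both parts by contradiction, using a single simple mechanism: divisibility of the entries of $E$ (respectively of a single column of $E$) forces the corresponding elements of $H:=\mathrm{K}_{0}(\varphi)(\mathrm{K}_{0}(A))$ to be divisible by the common factor, and torsion freeness of $\mathrm{K}_{0}(B)/H$ then allows one to keep dividing by that factor. As usual, identify $\mathrm{K}_{0}(A)$ with $\mathbb{Z}^{k}$ and $\mathrm{K}_{0}(B)$ with $\mathbb{Z}^{\ell}$, so that by \eqref{remark} we have $H=\{EX:X\in\mathbb{Z}^{k}\}$; since $\varphi\neq 0$, the matrix $E$ is nonzero and hence has at least one nonzero column.

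For part~\eqref{prop_tf_it1}, suppose toward a contradiction that $d:=\gcd(E)\geq 2$. Since $d$ divides every entry of $E$, it divides every coordinate of $EX$ for every $X\in\mathbb{Z}^{k}$, so $H\subseteq d\,\mathbb{Z}^{\ell}$. Choosing a nonzero column $E_{j}$ of $E$, we have $0\neq E_{j}=E e_{j}\in H$, where $e_{j}\in\mathbb{Z}^{k}$ denotes the $j$-th standard basis vector. I would now iterate: for any $Y\in H$ we may write $Y=dY'$ with $Y'\in\mathbb{Z}^{\ell}$ (as $H\subseteq d\,\mathbb{Z}^{\ell}$), and since $dY'=Y\in H$ and $\mathbb{Z}^{\ell}/H$ is torsion free we get $Y'\in H$. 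Applying this repeatedly starting from $Y=E_{j}$ shows $d^{n}\mid E_{j}$ for every $n\in\mathbb{N}$, which is impossible for a nonzero integer vector because $d\geq 2$. Hence $\gcd(E)=1$.

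For part~\eqref{prop_tf_it2}, assume in addition that $\mathrm{K}_{0}(\varphi)$ is injective. By Lemma~\ref{ddd} the columns $E_{1},\dots,E_{k}$ of $E$ are linearly independent, so the map $X\mapsto EX$ is injective on $\mathbb{Z}^{k}$ and in particular each $E_{j}$ is nonzero. Fix $j$ and put $d:=\gcd(E_{j})$, so that $E_{j}=dZ$ for some $Z\in\mathbb{Z}^{\ell}$. Since $E_{j}=E e_{j}\in H$ and $\mathbb{Z}^{\ell}/H$ is torsion free, $Z\in H$; write $Z=EW$ with $W\in\mathbb{Z}^{k}$. Then $E(dW)=dZ=E_{j}=E e_{j}$, and injectivity of $X\mapsto EX$ forces $dW=e_{j}$. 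Comparing $j$-th entries yields $d\mid 1$, so $\gcd(E_{j})=1$, as claimed.

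The argument involves no real obstacle; the only points needing a little care are: (i) ensuring $H$ contains a nonzero vector, so that the descent in part~\eqref{prop_tf_it1} actually produces a contradiction --- this is precisely where the hypothesis $\varphi\neq 0$ is used; and (ii) converting injectivity of $\mathrm{K}_{0}(\varphi)$ in part~\eqref{prop_tf_it2} into injectivity of the integer matrix map $X\mapsto EX$ and non-vanishing of each column of $E$, which is exactly the content of Lemma~\ref{ddd}.
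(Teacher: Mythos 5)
Your proof is correct and follows essentially the same route as the paper's: in both parts you divide by the relevant gcd, use torsion freeness of $\mathrm{K}_{0}(B)/H$ (with $H=E\,\mathbb{Z}^{k}$) to stay inside $H$, and then exploit divisibility of the entries of $E$ (part~1) or injectivity of $X\mapsto EX$, i.e.\ Lemma~\ref{ddd} (part~2), to force the gcd to equal $1$. The only cosmetic differences are that in part~1 the paper stops after a single division step, concluding $d^{2}\mid d$ over all columns, whereas you run an infinite descent on one nonzero column, and that, like the paper's own proof, you read the hypothesis of part~1 as torsion \emph{freeness} (the word ``torsion'' in the statement is evidently a typo).
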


\begin{proof}
 \eqref{prop_tf_it1}.
Set $d=\gcd(E)$. Fix $1\leq j \leq k$
and set $X_{j}=\frac{1}{d}E_j$,
where $E_{j}$ is the $j$-th column of $E$.	
So, $X_{j}\in M_{\ell\times 1}(\mathbb{Z})$
and $dX_{j}=E_{j}\in H$,
 where $H=\mathrm{K}_{0}(\varphi) (\mathrm{K}_{0}(A))$. Since
$\frac{\mathrm{K}_{0}(B)}{H}$ is torsion
free,
we get $X_j \in H$.
Thus, $X_j=EY_{j}$ for some 
$Y_{j}\in M_{k\times 1}(\mathbb{Z})$.
Hence, $d$ divides every entry
of $X_{j}$, and so $d^{2}$ divides
every entry
of $E_{j}$. Thus
$d^{2} | \gcd(E)=d$. Therefore, $d=1$.

For \eqref{prop_tf_it2}, let
 $1\leq j \leq k$ and put 
 $ d_{j}= \gcd(E_{j})$. Note that
 $E_j\neq 0$ since $\mathrm{K}_{0}(\varphi)$ is injective.
Put $X_{j} =  \frac{1}{d_{j}}E_{j}$.
Similar to the preceding argument,
it follows that $X_{j}\in H$. So there exist 
$\alpha_1, \ldots, \alpha_{k} \in \mathbb{Z}$ such that $ \frac{1}{d_{j}}E_{j}=X_{j} = \alpha_1 E_1 + \cdots + \alpha_{k} E_{k}$. Thus 
$E_{j}= \alpha_{1} d_{j} E_{1} + \cdots + \alpha_{k} d_{j} E_{k} $. Since 
$E_{1}, \ldots, E_{k}$ are linearly independent (by Lemma~\ref{ddd}), we get 
$1= \alpha_{j}d_{j}$, and so $d_{j} = 1$.
\end{proof}

The following is a corollary of
Proposition~\ref{prop_nonsq}. 
It should  be already known,
however, we did not find it in the literature. It can be seen as
an application of our study of torsion
freeness. This result extends the fact that
a square integer matrix has an  integer 
inverse matrix
if and only if its determinant is $\pm 1$.

\begin{corollary}\label{prop_inverse}
Let $k,\ell \in \mathbb{N}$ and let
$E\in M_{\ell \times k}(\mathbb{Z})$.
We set
\[
d=\gcd\big(\{\det(F) \colon F
 \text{ is a full sqaure submatrix of}\ E
 \}\big).
\]
\begin{enumerate}
\item\label{prop_inverse_it1}
If $k\leq \ell$ then
 there exists $K\in M_{k\times \ell}(\mathbb{Z})$ with $KE=I_k$
 if and only if $d=1$;
 \item\label{prop_inverse_it2} 
 If $ \ell\leq k$ then
 there exists $R\in M_{k\times \ell}(\mathbb{Z})$ with $ER=I_\ell$
 if and only if $d=1$.
\end{enumerate}
\end{corollary}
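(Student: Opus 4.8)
The plan is to deduce Corollary~\ref{prop_inverse} directly from Proposition~\ref{prop_nonsq}, using the simple observation that transposition swaps left and right inverses and turns full square submatrices into full square submatrices (of the transpose) with the same determinants.

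For part~\eqref{prop_inverse_it1}, when $k\leq\ell$ the equivalence is exactly the equivalence of \eqref{prop_nonsq_it1} and \eqref{prop_nonsq_it3} in Proposition~\ref{prop_nonsq}, so there is nothing further to do. For part~\eqref{prop_inverse_it2}, suppose $\ell\leq k$ and set $E^{T}\in M_{k\times\ell}(\mathbb{Z})$; note $\ell\leq k$ means $E^{T}$ has at least as many rows as columns, so Proposition~\ref{prop_nonsq} applies to $E^{T}$. First I would record that $R\in M_{k\times\ell}(\mathbb{Z})$ satisfies $ER=I_{\ell}$ if and only if $R^{T}E^{T}=I_{\ell}$, i.e. if and only if $E^{T}$ has a left inverse in $M_{\ell\times k}(\mathbb{Z})$. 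Next I would observe that the full square submatrices of $E^{T}$ are precisely the transposes of the full square submatrices of $E$ (a full square submatrix of $E^{T}$ has $\min(k,\ell)=\ell$ rows, hence comes from choosing $\ell$ rows of $E^{T}$, i.e. $\ell$ columns of $E$, and all $\ell$ columns of $E^{T}$, i.e. all $\ell$ rows of $E$ — which is the same as choosing a full square submatrix of $E$ and transposing). Since $\det(F^{T})=\det(F)$, the gcd $d$ computed from $E$ equals the gcd computed from $E^{T}$. Applying Proposition~\ref{prop_nonsq} (equivalence of \eqref{prop_nonsq_it1} and \eqref{prop_nonsq_it3}) to $E^{T}$ then gives: $E^{T}$ has an integer left inverse iff $d=1$, which by the first observation is exactly: $E$ has an integer right inverse $R$ iff $d=1$.

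There is no real obstacle here; the only point requiring a line of care is the bookkeeping that "full square submatrix" is preserved under transposition together with the matching of the two gcd's, and that in the case $\ell\leq k$ it is $E^{T}$ (not $E$) that satisfies the hypothesis $\text{rows}\geq\text{columns}$ needed to invoke Proposition~\ref{prop_nonsq}. One should also note the degenerate cases $k=\ell$ (where both parts apply and agree, recovering the classical statement that a square integer matrix is invertible over $\mathbb{Z}$ iff its determinant is $\pm1$) cause no trouble, since then the unique full square submatrix is $E$ itself and $d=|\det(E)|$.

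\begin{proof}
Part~\eqref{prop_inverse_it1} is precisely the equivalence of statements \eqref{prop_nonsq_it1} and \eqref{prop_nonsq_it3} in Proposition~\ref{prop_nonsq}.

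For part~\eqref{prop_inverse_it2}, assume $\ell\leq k$ and consider $E^{T}\in M_{k\times\ell}(\mathbb{Z})$, which has at least as many rows as columns. For $R\in M_{k\times\ell}(\mathbb{Z})$ we have $ER=I_{\ell}$ if and only if $R^{T}E^{T}=I_{\ell}$; thus $E$ has a right inverse in $M_{k\times\ell}(\mathbb{Z})$ if and only if $E^{T}$ has a left inverse in $M_{\ell\times k}(\mathbb{Z})$. A full square submatrix of $E^{T}$ has $\min(k,\ell)=\ell$ rows, hence is obtained by selecting $\ell$ rows and all $\ell$ columns of $E^{T}$, equivalently by selecting all $\ell$ rows and $\ell$ columns of $E$; so the full square submatrices of $E^{T}$ are exactly the transposes of the full square submatrices of $E$. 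Since $\det(F^{T})=\det(F)$, the integer $d$ in the statement equals
\[
\gcd\big(\{\det(G) \colon G
 \text{ is a full square submatrix of}\ E^{T}
 \}\big).
\]
Applying the equivalence of \eqref{prop_nonsq_it1} and \eqref{prop_nonsq_it3} in Proposition~\ref{prop_nonsq} to $E^{T}$, we conclude that $E^{T}$ has an integer left inverse if and only if $d=1$. By the observation above, this is equivalent to the existence of $R\in M_{k\times\ell}(\mathbb{Z})$ with $ER=I_{\ell}$.
\end{proof}
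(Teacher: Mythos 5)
Your proposal is correct and follows essentially the same route as the paper: Part~\eqref{prop_inverse_it1} is read off from the equivalence of \eqref{prop_nonsq_it1} and \eqref{prop_nonsq_it3} in Proposition~\ref{prop_nonsq}, and Part~\eqref{prop_inverse_it2} is obtained by applying Part~\eqref{prop_inverse_it1} to the transpose of $E$. You merely spell out the transposition bookkeeping (left/right inverses swap, full square submatrices and their determinants are preserved) that the paper leaves implicit.
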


\begin{proof}
Part~\eqref{prop_inverse_it1}
follows from
the equivalence \eqref{prop_nonsq_it1}$\Leftrightarrow$\eqref{prop_nonsq_it3} in Proposition~\ref{prop_nonsq}.
Part~\eqref{prop_inverse_it2}
follows from 
Part~\eqref{prop_inverse_it1} applied to
the transpose of $E$.
\end{proof}

The proof of  Proposition~\ref{prop_tf_fd} gives a method to compute the matrix $K$ (and similarly, the matrix
$R$) in Corollary~\ref{prop_inverse}. We give an illustrative example.

\begin{example}\label{exa_inv}
 Let
 \begin{align*}
         E = \left[\begin{matrix}
       3 & 3   \\
       2 & 0   \\
       0 & 5
       \end{matrix}\right].    
\end{align*} 
Then, with the notation of the 
proof of Proposition~\ref{prop_tf_fd}, the full square submatrices of
$E$ are
 \begin{align*}
         F_{1} = \left[\begin{matrix}
       3 & 3   \\
       2 & 0   
       \end{matrix}\right],\ 
        F_{2} = \left[\begin{matrix}
       2 & 0   \\
       0 & 5
       \end{matrix}\right],\ 
       F_3=\left[\begin{matrix}
       3 & 3   \\
       0 & 5
       \end{matrix}\right],
\end{align*} 
and $d=\gcd\left(\det(F_{1}),\det(F_{2}),\det(F_{3})\right)=\gcd(-6,10,15)=1$.
Also, $K_r=\mathrm{adj}(F_{r})$,
for $r=1,2,3$. So,
\begin{align*}
        K_{1} = \left[\begin{matrix}
       0 & -3   \\
       -2 & 3   
       \end{matrix}\right],\ 
        K_{2} = \left[\begin{matrix}
       5 & 0   \\
       0 & 2
       \end{matrix}\right],\ 
       K_3=\left[\begin{matrix}
       5 & -3   \\
       0 & 3
       \end{matrix}\right],  
\end{align*} 
\begin{align*}
        \tilde{K_{1}} = \left[\begin{matrix}
       0 & -3 & 0   \\
       -2 & 3  & 0 
       \end{matrix}\right],\
        \tilde{K_{2}} = \left[\begin{matrix}
       0 & 5 & 0   \\
      0 & 0 & 2
       \end{matrix}\right],\ 
      \tilde{K_{3}}=\left[\begin{matrix}
       5 & 0 & -3   \\
       0 & 0 & 3
       \end{matrix}\right].  
\end{align*}
Since $d=1=4\det(F_{1})+\det(F_{2})+\det(F_{3})$, we get
 
\[
K= 4\tilde{K_{1}}+ \tilde{K_{2}} 
 +\tilde{K_{3}}=
 \left[\begin{matrix}
       5 & -7 & -3   \\
       -8 & 12 & 5
       \end{matrix}\right].
\]
So, $KE=I_{2}$, as desired.
\end{example} 

Let
 $\varphi\colon A \to B$ be a 
 $*$-homomorphism between unital 
 C*-algebras $A$ and $B$. Obviously, 
 if  $\varphi$ is unital, then 
 $\mathrm{K}_{0}(\varphi)\colon \mathrm{K}_{0}(A) \to \mathrm{K}_{0}(B)$ is unital,
  that is, $\mathrm{K}_{0}(\varphi)([1_{A}]_{0}) = [1_{B}]_{0} $.
The following proposition gives
a sufficient condition
for the converse.

\begin{proposition}\label{prop_unital}
Let  $A$ and $B$ be unital C*-algebras 
where $B$ has the cancellation property
(in particular, if $B$ has stable rank one). 
Let $\varphi \colon A \to B$ be a 
$*$-homomorphism. Then $\varphi$ is unital
 if and only if 
 $\mathrm{K}_{0}(\varphi)$ 
 is unital.
\end{proposition}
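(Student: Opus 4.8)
The plan is to prove the nontrivial direction: assuming $\mathrm{K}_0(\varphi)$ is unital, deduce that $\varphi$ is unital. First I would set $e = \varphi(1_A)$, which is a projection in $B$ satisfying $e \leq 1_B$. The hypothesis $\mathrm{K}_0(\varphi)([1_A]_0) = [1_B]_0$ says precisely that $[e]_0 = [1_B]_0$ in $\mathrm{K}_0(B)$. Write $f = 1_B - e$, so that $e$ and $f$ are orthogonal projections with $e + f = 1_B$. The goal is to show $f = 0$, which is equivalent to $\varphi(1_A) = 1_B$.

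The key step is to convert the $\mathrm{K}_0$-equality $[e]_0 = [1_B]_0$ into a Murray--von~Neumann equivalence. Since $[e]_0 = [e]_0 + [f]_0 - [f]_0$ and $[e]_0 = [1_B]_0 = [e+f]_0 = [e]_0 + [f]_0$, we get $[f]_0 = 0$ in $\mathrm{K}_0(B)$. Now I would invoke the cancellation property: for a unital C*-algebra with the cancellation property, $[f]_0 = 0$ in $\mathrm{K}_0(B)$ forces $f \sim 0$, hence $f = 0$. The cleanest route to this is the standard fact that when $B$ has cancellation, the map $\mathcal{P}_\infty(B)/\!\sim \,\to \mathrm{K}_0(B)$ is injective on the positive cone, i.e., $[p]_0 = [q]_0$ with $p,q \in \mathcal{P}_\infty(B)$ implies $p \sim q$; applying this with $p = f$ and $q = 0$ gives $f \sim 0$, so $f = 0$ and therefore $\varphi(1_A) = e = e + f = 1_B$. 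For the parenthetical clause I would just recall (as already used elsewhere in the paper, e.g.\ in the proof of Theorem~\ref{disconnected}) that stable rank one implies the cancellation property, so the stated special case follows.

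The main obstacle is justifying the passage from $[f]_0 = 0$ to $f \sim 0$ cleanly using only "cancellation of projections" as defined in Section~\ref{sec_pre}. The subtlety is that $[f]_0 = 0$ a priori only gives $f \oplus r \sim 0 \oplus r = r$ for some projection $r \in M_n(B)$ (stable equivalence), not an equivalence in $B$ itself; one must use cancellation in all matrix amplifications $M_n(B)$ to cancel the $r$. Since the paper defines the cancellation property as cancellation of projections holding in every $M_n(B)$, this is exactly what is available, so the argument goes through — I would phrase it as: $[f]_0 = 0$ means $f \oplus r \sim r$ in some $M_n(B)$ for a projection $r$, and applying cancellation in $M_n(B)$ (with the orthogonal decomposition $f \perp 0$ and $r = 0 \oplus r$) yields $f \sim 0$. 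This is routine once the bookkeeping is set up, and no deeper input is needed. The converse direction is immediate from functoriality, as already noted in the paragraph preceding the proposition.
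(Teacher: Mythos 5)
Your proposal is correct and takes essentially the same route as the paper: both use the cancellation property of $B$ to upgrade the $\mathrm{K}_0$-equality $[\varphi(1_A)]_0=[1_B]_0$ to a genuine equivalence of projections and then conclude $\varphi(1_A)=1_B$. The only cosmetic difference is that you cancel to obtain $1_B-\varphi(1_A)\sim 0$, whereas the paper passes from $\varphi(1_A)\sim 1_B$ to unitary equivalence $\varphi(1_A)\sim_u 1_B$; your write-up in fact makes explicit the stable-equivalence bookkeeping (cancellation in the matrix amplifications $M_n(B)$) that the paper's proof leaves implicit.
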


\begin{proof}
First, note that
$\mathrm{K}_{0}(\varphi)$ is unital if and only if 
$[\varphi(1_{A})]_{0} = [1_{B}]_{0}$. Since $B$ has the cancellation property, $[\varphi(1_{A})]_{0} = [1_{B}]_{0}$ is equivalent to  $\varphi(1_{A})\sim 1_{B}$. This  is equivalent to $\varphi(1_{A})\sim_{u}   1_{B}$
(since $B$ has the cancellation property),
 that is, there exists a unitary 
 $u \in B$ such that 
 $\varphi(1_{A}) = u1_{B}u^* = 1_{B}.$ 
\end{proof}

By the preceding proposition, a 
$*$-homomorphism  $\varphi \colon A \to B$
between finite-dimensional C*-algebras
(or AF~algebras)
 is unital if and only if
$\mathrm{K}_{0}(\varphi)$ is unital
(since these algebras
have stable rank one).

The multiplicity matrix of a $*$-homomorphism $\varphi $
between finite-dimensional C*-algebras
can be used to recognize properties
of $\varphi $. For example, 
Part~\eqref{inj} of Theorem~\ref{a}
gives an equivalent condition for
injectivity of $\varphi$,
and Lemma~\ref{ddd} deals with
injectivity of $\mathrm{K}_{0}(\varphi)$.
In the following two propositions
we  obtain equivalent conditions
for surjectivity of 
$\varphi $ and
$\mathrm{K}_{0}(\varphi)$.

\begin{proposition}\label{d} 
	With the notation of Theorem~\ref{a}, $ \varphi$ is surjective if and only if
	the following conditions hold:
	\begin{enumerate}
\item\label{rr}
for every $i\in\{1,\ldots,\ell\}$ there is 
$j\in\{1,\ldots,k\}$ such that 
$a_{i,j} = 1$ and $a_{i,j'}=0$ for all 
$j'= 1,\ldots,k$ with $j' \neq j$;
\item\label{dd}
 $ \mathrm{card}(
\{i \colon a_{i,j} \neq 0 \}) \leq 1$,  for all $j=1,\ldots,k$; 
\item\label{nn}
 $EV_{1} = V_{2}$.
\end{enumerate}
\end{proposition}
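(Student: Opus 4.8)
The statement characterizes surjectivity of a $*$-homomorphism $\varphi$ between finite-dimensional C*-algebras in terms of its multiplicity matrix $E = (a_{i,j})$. I would prove the equivalence by establishing both directions directly from the canonical form of $\varphi$ given in Theorem~\ref{a}. Recall that, up to the unitary $\nu$, each component $\varphi_i = \pi_i\circ\varphi\colon A\to M_{m_i}$ sends $(u_1,\ldots,u_k)$ to the block-diagonal matrix with $u_j$ repeated $a_{i,j}$ times (for $j=1,\ldots,k$) followed by a zero block of size $s_i$. Since conjugation by a unitary does not affect surjectivity, I may work with this normal form throughout.

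\textbf{Necessity.} Suppose $\varphi$ is surjective. First, $\varphi$ unital follows (if $\varphi$ is onto then $\varphi(1_A)$ is a unit for $B$, hence equals $1_B$), so by Theorem~\ref{a}\eqref{a_it2} we get $EV_1 = V_2$, which is \eqref{nn}. For \eqref{rr}, fix $i$ and consider $\varphi_i\colon A\to M_{m_i}$, which must itself be surjective (compose $\varphi$ with $\pi_i$). Now $\varphi_i$ maps into the subalgebra of $M_{m_i}$ consisting of block-diagonal matrices with the block pattern described above; for the image to be \emph{all} of $M_{m_i}$, this block subalgebra must be $M_{m_i}$ itself, which forces exactly one nonzero $a_{i,j}$ and that value to be $1$ and $s_i=0$. (The point: $\varphi_i(A)$ is a proper subalgebra of $M_{m_i}$ unless the ampliation is trivial and there is no zero padding.) This gives \eqref{rr}. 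Finally, \eqref{dd}: if some column $j$ had $a_{i,j}\neq 0$ for two distinct indices $i\neq i'$, then by \eqref{rr} we would have $a_{i,j}=a_{i',j}=1$ with all other entries in those rows zero; but then the elements $(e_{i}, e_{i'})\in B$ (units of the $i$-th and $i'$-th summands, say $1$ in one slot and $0$ elsewhere) cannot both lie in the image, since the $j$-th coordinate $u_j$ of any preimage would have to equal $1_{M_{n_j}}$ for one and $0$ for the other simultaneously — a contradiction. Alternatively, and more cleanly: \eqref{rr} and \eqref{nn} already force $EV_1=V_2$ with each row of $E$ a standard basis vector, so the rows of $E$ realize a function from $\{1,\ldots,\ell\}$ to $\{1,\ldots,k\}$; if this function were not injective we would have $n_j = m_i = m_{i'}$ for $i\neq i'$ mapping to the same $j$, and the corresponding two summands $M_{m_i}, M_{m_{i'}}$ of $B$ both receive the \emph{same} block $u_j$, so the image lies in the diagonal $\{(x,x)\}\subseteq M_{m_i}\oplus M_{m_{i'}}$ — not surjective.

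\textbf{Sufficiency.} Assume \eqref{rr}, \eqref{dd}, \eqref{nn}. By \eqref{rr} each row $i$ of $E$ is the standard basis vector $e_{\sigma(i)}$ for a well-defined $\sigma(i)\in\{1,\ldots,k\}$, and $s_i=0$ for all $i$; by \eqref{dd} the map $\sigma$ is injective. Thus $\varphi$, in normal form, sends $(u_1,\ldots,u_k)\mapsto (u_{\sigma(1)}, u_{\sigma(2)},\ldots,u_{\sigma(\ell)})$ — literally a coordinate projection (with possible repetition prevented by injectivity of $\sigma$). Such a map is visibly surjective onto $B = M_{m_1}\oplus\cdots\oplus M_{m_\ell}$: given $(b_1,\ldots,b_\ell)\in B$, set $u_{\sigma(i)} = b_i$ for each $i$ (consistent since $\sigma$ is injective, and $m_i = n_{\sigma(i)}$ by the block structure) and $u_j$ arbitrary for $j\notin\mathrm{ran}(\sigma)$; then $\varphi$ of this tuple is $(b_1,\ldots,b_\ell)$. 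Conjugating back by $\nu$ keeps surjectivity.

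\textbf{Main obstacle.} The only genuinely delicate point is the claim used in \emph{necessity}: that $\varphi_i\colon A\to M_{m_i}$ being surjective forces the block pattern to be trivial (one $a_{i,j}=1$, rest zero, $s_i=0$). This rests on the elementary but essential fact that a proper $*$-subalgebra of $M_{m}(\mathbb{C})$ arising as the image of a block-diagonal ampliation cannot be all of $M_m(\mathbb{C})$ — equivalently, that $\dim\varphi_i(A) \le \sum_j a_{i,j}^2 n_j^2 < m_i^2$ unless there is a single block of multiplicity one and no padding. I would phrase this via a dimension count, or via the observation that the image commutes with a nontrivial projection (the zero-padding projection, or a permutation of identical blocks) whenever the pattern is nontrivial. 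Once this lemma is in hand, the rest is bookkeeping with the normal form of Theorem~\ref{a}.
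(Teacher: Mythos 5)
Your proof is correct and follows essentially the same route as the paper: unitality of a surjective $\varphi$ gives \eqref{nn}, surjectivity of each component $\pi_i\circ\varphi$ forces the single-block rows of \eqref{rr}, injectivity of the resulting row-to-column assignment (the diagonal obstruction) gives \eqref{dd}, and sufficiency is the same explicit preimage construction via the normal form; in fact you justify the \eqref{rr} step more carefully than the paper does. One small repair: the displayed chain $\dim\varphi_i(A)\le\sum_j a_{i,j}^2 n_j^2<m_i^2$ fails to be strict when a single block has multiplicity $\ge 2$ and $s_i=0$, so use the exact image dimension $\sum_{j\colon a_{i,j}\ge 1}n_j^2$ (or your commutant observation) to rule out that case as well.
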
  
  \begin{proof}
By  Theorem~\ref{a}  we have
  $$v \varphi (u_{1},\ldots , u_{k}) v^* = (v_{1},\ldots ,v_{\ell}) (\varphi_{1}(u_{1},\ldots,u_{k}), \ldots,  \varphi_{\ell}(u_{1},\ldots,u_{k}) ) (v_{1}^* , \ldots , v_{\ell}^*) $$ 
  	\begin{align*}
   =\left ( \left[\begin{smallmatrix}
  	u_1^{(a_{1,1})}&&\cdots&&0\\
  	&u_2^{(a_{1,2})}&&&\\
  		&&\ddots&&\\
  	&&&u_k^{(a_{1,k})}&\\
  	0&\cdots&&&0^{(s_1)}
  	\end{smallmatrix} \right], \ldots ,\left[\begin{smallmatrix}
  	  	u_1^{(a_{\ell ,1})}&&\cdots&&0\\
  	  	&u_2^{(a_{\ell ,2})}&&&\\
  	  		&&\ddots&&\\
  	  	&&&u_k^{(a_{\ell ,k})}&\\
  	  	0&\cdots&&&0^{(s_\ell)}
  	  	\end{smallmatrix} \right]\right). 
  	  	\end{align*}
  	  	
  Suppose that $\varphi$ is surjective. Thus $\varphi$ is unital, and so by Theorem \ref{a},
  $s_{1}= s_{2} =\cdots=s_{\ell} =0$ and $EV_{1} = V_{2}$, which is \eqref{nn}. 
  Using this and the formula of $\varphi$ above, there are 
  $j_{1}, j_{2},\ldots, j_{\ell}$ such that
  $v \varphi (u_{1},\ldots , u_{k}) v^*= (u_{j_{1}} , u_{j_{2}}, \ldots ,u_{j_{\ell}} ) $. 
  This implies  \eqref{rr}. Since $\varphi$ is surjective, $j_{1},\ldots, j_{\ell}$ 
  are distinct. This implies \eqref{dd}.
    
 For the  converse,
suppose that \eqref{rr}, \eqref{dd}, 
and \eqref{nn} hold. 
  By \eqref{rr} and \eqref{dd}, there are distinct positive integers 
 $j_{1},\ldots, j_{\ell}$ such that $a_{i,j_{i}} = 1$ and $a_{i,j^{'}} =0$ 
 for all $j^{'} = 1,\ldots,k$ with $j^{'} \neq j_{i}$. 
 Using this and \eqref{nn}, we have
 $v \varphi (u_{1},\ldots , u_{k}) v^*= (u_{j_{1}} , u_{j_{2}}, \ldots ,u_{j_{\ell}} )$, 
 and so 
 $\varphi (u_{1},\ldots , u_{k}) = v^* (u_{j_{1}} , u_{j_{2}}, \ldots ,u_{j_{\ell}} )v $.
 Let $(w_{1} , \ldots , w_{\ell}) \in B$. We will find $(u_{1},\ldots, u_{k}) \in A $ such that 
 $\varphi (u_{1},\ldots , u_{k}) = (w_{1} , \ldots , w_{\ell})$. For every
  $i\in\{1,\ldots,\ell\}$,
  put 
       	$$ u_{i} =\begin{cases}
       		v_{r}w_{r}v_{r}^*&  i = j_{r}\quad \text{for some}\ 1 \leq r \leq \ell,\\
       			0& 	 \mbox{otherewise}.
       			\end{cases}$$ 
So,
 $\varphi (u_{1},\ldots , u_{k}) = v^*(v_{1}w_{1}v_{1}^*, \ldots, v_{\ell}w_{\ell}v_{\ell}^* )v = (w_{1} , \ldots , w_{\ell})  $, and  $\varphi$ is onto.
  \end{proof}
  
  In the following,  let 
  $\mathrm{span}_{\mathbb{Z}} \{E_1,\ldots,E_k\}= \big\{ x_{1} E_{1} + \cdots + x_{k}E_{k} \colon x_{1},\ldots,x_{k} \in \mathbb{Z} \big\}$.
  The following lemma gives another 
equivalence for Condition~\eqref{dd}
of   Proposition~\ref{d}.

\begin{lemma}\label{lem_equ}
Let $E=(a_{i,j})$ be an $\ell \times k$ matrix of nonnegative integers. Suppose that 
for every $i\in\{1,\ldots,\ell\}$ there is 
$j\in\{1,\ldots,k\}$ such that 
$a_{i,j} = 1$ and $a_{i,j'}=0$ for all 
$j'= 1,\ldots,k$ with $j' \neq j$.
 The following conditions are equivalent:
 \begin{enumerate}
\item \label{span i}
for every $j\in\{1,\ldots,k\}$, 
$\mathrm{card}( \{i \colon a_{i,j} \neq 0 \}) \leq 1$;
\item \label{span ii}
$
\mathrm{span}_{\mathbb{Z}} \{E_1,\ldots,E_k\} = \mathbb{Z}^{\ell}
$.
\end{enumerate}
\end{lemma}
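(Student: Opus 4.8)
The plan is to unwind the hypothesis to an explicit combinatorial description of the columns of $E$, and then read off the $\mathbb{Z}$-span directly. The hypothesis says that for each row index $i$ there is a \emph{unique} $j(i)\in\{1,\ldots,k\}$ with $a_{i,j(i)}=1$ and $a_{i,j'}=0$ for $j'\neq j(i)$ (uniqueness because $j(i)$ is forced to be the only position of a nonzero entry in row $i$). First I would record the consequence for columns: writing $e_1,\ldots,e_\ell$ for the standard basis of $\mathbb{Z}^\ell$, the $j$-th column is $E_j=\sum_{i\,:\,j(i)=j}e_i$; equivalently $(E_j)_i=1$ if $j(i)=j$ and $(E_j)_i=0$ otherwise. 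Hence the supports $S_j:=\{i : j(i)=j\}$, $j=1,\ldots,k$, form a partition of $\{1,\ldots,\ell\}$ into (possibly empty) blocks, and the nonzero $E_j$ are precisely the indicator vectors of the nonempty blocks. Note that Condition~\eqref{span i} asserts exactly that each $S_j$ has at most one element, i.e.\ that the map $i\mapsto j(i)$ is injective.

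With this in hand both implications are short. For \eqref{span i}$\Rightarrow$\eqref{span ii}: if $i\mapsto j(i)$ is injective then $S_{j(i)}=\{i\}$ for every $i$, so $E_{j(i)}=e_i$; thus $\{E_1,\ldots,E_k\}\supseteq\{e_1,\ldots,e_\ell\}$ and therefore $\mathrm{span}_{\mathbb{Z}}\{E_1,\ldots,E_k\}=\mathbb{Z}^\ell$. For the contrapositive of \eqref{span ii}$\Rightarrow$\eqref{span i}: suppose some $S_{j_0}$ contains two distinct indices $i_1\neq i_2$, so $j(i_1)=j(i_2)=j_0$. Then any $v=\sum_{j=1}^k x_j E_j$ with $x_j\in\mathbb{Z}$ satisfies $v_i=\sum_j x_j (E_j)_i = x_{j(i)}$ for every $i$, hence $v_{i_1}=x_{j_0}=v_{i_2}$. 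So every vector in $\mathrm{span}_{\mathbb{Z}}\{E_1,\ldots,E_k\}$ has equal $i_1$-th and $i_2$-th coordinates, whereas $e_{i_1}$ does not; thus $e_{i_1}\notin\mathrm{span}_{\mathbb{Z}}\{E_1,\ldots,E_k\}$ and \eqref{span ii} fails.

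I do not expect any genuine obstacle: the content is entirely combinatorial, and the only care needed is in writing down the column description $E_j=\sum_{i:j(i)=j}e_i$ cleanly, after which each implication is essentially one line. As an alternative packaging, one may observe once and for all that $\mathrm{span}_{\mathbb{Z}}\{E_1,\ldots,E_k\}$ equals the set of integer vectors that are constant on each block $S_j$ of the partition, and this set is all of $\mathbb{Z}^\ell$ if and only if every block is a singleton; I would likely mention this reformulation but carry out the two-direction argument above as the actual proof.
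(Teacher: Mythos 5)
Your proof is correct and follows essentially the same route as the paper: in one direction the nonzero columns are exactly the standard basis vectors $e_1,\ldots,e_\ell$, and in the other, two $1$'s in a single column force every vector in $\mathrm{span}_{\mathbb{Z}}\{E_1,\ldots,E_k\}$ to have equal $i_1$-th and $i_2$-th coordinates. Your block-partition bookkeeping just makes explicit what the paper leaves as ``it is not hard to see.''
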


\begin{proof}
 \eqref{span i}$\Rightarrow$\eqref{span ii}: It is not hard to see that $\ell\leq k$ and 
 $\{e_1,\ldots,e_\ell\}= \{E_1,\ldots,E_k\} \setminus \{0\}$, 
 where
 $\{e_1,\ldots,e_\ell\}$ is the standard 
 basis of $\mathbb{Z}^{\ell}$.
  This gives \eqref{span ii}. 
 
\eqref{span ii}$\Rightarrow$\eqref{span i}:   
Suppose that there is $1 \leq j \leq k$, and there are $1 \leq i_{1}, i_{2} \leq \ell$ such that $i_1 \neq i_2$ and $a_{i_{1},j}= a_{i_{2},j} =1$. Then for each column matrix 
$(y_{1}, \ldots y_{\ell})^{T}$ in 
$\mbox{span}_{\mathbb{Z}} \{E_1,\ldots,E_k\}$, we have $y_{i_{1}} = y_{i_{2}}$.
Therefore,  $\mbox{span}_{\mathbb{Z}} \{E_1,\ldots,E_k\} \neq \mathbb{Z}^{\ell}$, which is a contradiction. 
\end{proof}
   
  \begin{proposition}\label{K0 surjective}
	With the notation of Theorem~\ref{a},  
	$\mathrm{K}_{0} (\varphi)$ is surjective if and only if 
$
\mathrm{span}_{\mathbb{Z}} \{E_1,\ldots,E_k\} = \mathbb{Z}^{\ell},
$
where $E_{1},\ldots,E_{k}$ are the
columns of 
$E$.
  \end{proposition}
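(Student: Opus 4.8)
The plan is to reduce this statement to the description of $\mathrm{K}_0(\varphi)$ as multiplication by the multiplicity matrix $E$, which was recorded in \eqref{remark}, and then to observe that surjectivity of a $\mathbb{Z}$-linear map given by a matrix is literally the statement that the columns of the matrix span the codomain. First I would recall that, under the natural identifications $\mathrm{K}_0(A)\cong\mathbb{Z}^k$ and $\mathrm{K}_0(B)\cong\mathbb{Z}^\ell$, the induced map $\mathrm{K}_0(\varphi)\colon\mathbb{Z}^k\to\mathbb{Z}^\ell$ is $X\mapsto EX$, where $E=R_\varphi$ is the multiplicity matrix from Theorem~\ref{a}. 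This is exactly formula \eqref{remark}, so no work is needed there.

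Next I would compute the range of this map. For a column matrix $X=(x_1,\ldots,x_k)^T\in\mathbb{Z}^k$ we have $EX=x_1E_1+\cdots+x_kE_k$, where $E_1,\ldots,E_k$ are the columns of $E$, exactly as in the displayed computation inside the proof of Lemma~\ref{ddd}. Hence the image of $\mathrm{K}_0(\varphi)$ is precisely $\mathrm{span}_{\mathbb{Z}}\{E_1,\ldots,E_k\}$. Therefore $\mathrm{K}_0(\varphi)$ is surjective, i.e.\ its image equals all of $\mathrm{K}_0(B)\cong\mathbb{Z}^\ell$, if and only if $\mathrm{span}_{\mathbb{Z}}\{E_1,\ldots,E_k\}=\mathbb{Z}^\ell$. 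That is the claim.

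There is essentially no obstacle here; the only point that deserves a sentence is that the identification $\mathrm{K}_0(B)\cong\mathbb{Z}^\ell$ is a group isomorphism carrying the generating set $\{[q]_0:q\in\mathcal{P}_\infty(B)\}$ appropriately, so that surjectivity in the two pictures coincides — but this is already built into the setup of Section~\ref{sec_pre} and into \eqref{remark}. So the proof is a one-paragraph unwinding of definitions: rewrite $\mathrm{K}_0(\varphi)$ via \eqref{remark}, identify its image with the $\mathbb{Z}$-span of the columns of $E$, and read off the equivalence. If one wished, one could additionally remark that combining this Proposition with Lemma~\ref{lem_equ} and Proposition~\ref{d} shows $\varphi$ surjective $\Longrightarrow$ $\mathrm{K}_0(\varphi)$ surjective in the finite-dimensional case (the converse failing, as the example $C[0,1]\to\mathbb{C}\oplus\mathbb{C}$ after Theorem~\ref{thm_inj_sur} shows), but that is commentary rather than part of the proof.
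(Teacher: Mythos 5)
Your proof is correct and is essentially the paper's own argument: both simply invoke \eqref{remark} to write $\mathrm{K}_{0}(\varphi)$ as $X\mapsto EX$ and identify its range with $\mathrm{span}_{\mathbb{Z}}\{E_1,\ldots,E_k\}$, from which the equivalence is immediate.
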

  
  \begin{proof}
  By \eqref{remark}, we have  
    $\mathrm{K}_{0}(\varphi) \colon \mathbb{Z}^{k} \to \mathbb{Z}^{\ell}$,
    $ X \mapsto E X$.
   So, the set on the left side of the 
   above equality is the range of 
   $\mathrm{K}_{0}(\varphi)$. 
The statement follows.     
  \end{proof}
  
 Combining Propositions~\ref{d},
 \ref{K0 surjective}, Theorem~\ref{a}\eqref{a_it2},
 and 
 Lemma~\ref{lem_equ},  we get:
 
 \begin{corollary}\label{cor_inj_sur}
With the notation of Theorem~\ref{a},  
	$ \varphi$ is surjective if and only if  $\mathrm{K}_{0} (\varphi)$ is surjective,
	$\varphi$ is unital,
	and Condition~\eqref{rr}
	in Proposition~\ref{d} holds.
 \end{corollary}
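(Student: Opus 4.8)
The plan is to derive this corollary purely by assembling the characterizations already established, with Lemma~\ref{lem_equ} serving as the bridge between the combinatorial condition that governs surjectivity of $\varphi$ and the spanning condition that governs surjectivity of $\mathrm{K}_{0}(\varphi)$. The ingredients are: Proposition~\ref{d}, which says that $\varphi$ is surjective precisely when Conditions~\eqref{rr}, \eqref{dd}, and \eqref{nn} all hold; Theorem~\ref{a}\eqref{a_it2}, which identifies Condition~\eqref{nn} (that is, $EV_{1}=V_{2}$) with unitality of $\varphi$; and Proposition~\ref{K0 surjective}, which says that $\mathrm{K}_{0}(\varphi)$ is surjective exactly when $\mathrm{span}_{\mathbb{Z}}\{E_{1},\ldots,E_{k}\}=\mathbb{Z}^{\ell}$, i.e.\ when Condition~\eqref{span ii} of Lemma~\ref{lem_equ} holds.

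For the forward direction I would start from surjectivity of $\varphi$, invoke Proposition~\ref{d} to obtain \eqref{rr}, \eqref{dd}, and \eqref{nn}, rephrase \eqref{nn} as unitality of $\varphi$ via Theorem~\ref{a}\eqref{a_it2}, and then, noting that \eqref{rr} holds so that the hypothesis of Lemma~\ref{lem_equ} is met, convert Condition~\eqref{dd} (which is Condition~\eqref{span i} of that lemma) into the spanning condition \eqref{span ii}; by Proposition~\ref{K0 surjective} this last condition is exactly surjectivity of $\mathrm{K}_{0}(\varphi)$. This produces the three asserted consequences.

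For the reverse direction I would assume that $\mathrm{K}_{0}(\varphi)$ is surjective, that $\varphi$ is unital, and that \eqref{rr} holds. Proposition~\ref{K0 surjective} turns surjectivity of $\mathrm{K}_{0}(\varphi)$ into \eqref{span ii}; since \eqref{rr} is in force, Lemma~\ref{lem_equ} converts \eqref{span ii} back into \eqref{span i}, i.e.\ into \eqref{dd}; and Theorem~\ref{a}\eqref{a_it2} turns unitality of $\varphi$ into \eqref{nn}. With \eqref{rr}, \eqref{dd}, and \eqref{nn} all available, Proposition~\ref{d} yields surjectivity of $\varphi$.

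The only point requiring care---a matter of bookkeeping rather than a genuine obstacle---is that Lemma~\ref{lem_equ} is stated under the standing hypothesis that Condition~\eqref{rr} holds, so in each direction one must first secure \eqref{rr} (from Proposition~\ref{d} in the forward direction, by assumption in the reverse) before appealing to the equivalence \eqref{span i}$\Leftrightarrow$\eqref{span ii}. Once that is observed, every remaining step is a direct substitution, and the entire content of the corollary resides in the propositions and the lemma it cites.
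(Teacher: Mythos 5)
Your proposal is correct and follows exactly the route the paper intends: the paper derives this corollary by combining Proposition~\ref{d}, Proposition~\ref{K0 surjective}, Theorem~\ref{a}\eqref{a_it2}, and Lemma~\ref{lem_equ}, just as you do. Your remark that Condition~\eqref{rr} must be secured before invoking the equivalence in Lemma~\ref{lem_equ} is the right point of care and is handled correctly in both directions.
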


Propositions~\ref{d}, 
\ref{K0 surjective}, and Lemmas~\ref{ddd} and \ref{lem_equ}
 can be used to
construct examples to show that the converse
of the first two parts of 
 Theorem~\ref{main_fd} does not hold.



\begin{example}
Consider a $*$-homomorphism 
  $\varphi \colon M_{2} \oplus M_{3} \oplus M_{4} \to M_{5} \oplus M_{4}$ where
  \begin{align*}
       \varphi(x, y, z) = \left( \left[\begin{matrix}
       x   &0  \\
       0 &y   \\
       \end{matrix}\right]  
       , z \right).   
       \end{align*}
       It is clear that $\varphi$ is not surjective. But, since the multiplicity
       matrix of $\varphi$ is  
   \begin{align*}
       E = \left[\begin{matrix}
       1 & 1 & 0  \\
       0 & 0 & 1  \\
       \end{matrix}\right],    
       \end{align*} 
Proposition~\ref{K0 surjective} implies that  
$\mathrm{K}_{0}(\varphi)$ is surjective. 
Also, $\varphi$ is injective but 
$\mathrm{K}_{0}(\varphi)$ is not,
by Lemma~\ref{ddd}.
\end{example}

\begin{remark}
In this paper we did not consider the functor $\mathrm{K}_1$
(except in Theorem~\ref{thm_inj_sur})
because in some cases of interest (e.g.,
finite-dimentional C*-algebras and AF~algebras) it vanishes.
However, the functor $\mathrm{K}_1$
(as well as the functor $\mathrm{Cu}$)
may be used to obtain certain information
about  $*$-homomorphisms between 
more delicate examples of C*-algebras.
\end{remark}
\vspace{.3cm}
{\bf Acknowledgement.}  The authors would like to thank  Nasser Golestani for suggesting the problem and sharing ideas for proving the results of this paper. 

\end{document}